\newtheorem{theorem}{Theorem}
\theoremstyle{plain}
\newtheorem{example}{Example}
\newtheorem{proposition}{Proposition}
\newtheorem{remark}{Remark}
\numberwithin{equation}{section}
\begin{document}

\title[Secondary cohomology operations]
{Secondary cohomology operations
and the loop space cohomology}

\author{Samson Saneblidze}
\address{Samson Saneblidze, A. Razmadze Mathematical Institute,
I.Javakhishvili Tbilisi State University  2, Merab Aleksidze II Lane,
 Tbilisi 0193, Georgia}
 \email{sane@rmi.ge}

\thanks{This work was supported by Shota Rustaveli
National Science Foundation of Georgia (SRNSFG) [grant number
FR-23-5538]}

\subjclass[2010]{{55S20, 55P35, 55N10}}

\keywords{secondary cohomology operations, loop spaces, Hopf algebra,
filtered Hirsch model}
\date{}

\begin{abstract}

Motivated by the loop space cohomology we construct the secondary
operations on the cohomology $H^*(X; \mathbb{Z}_p)$  to be  a Hopf
algebra
for a simply connected space $X.$
The  loop space cohomology ring $H^*(\Omega X; \mathbb{Z}_p)$ is
calculated in terms of generators and relations. This answers to A.
Borel's decomposition of a Hopf algebra into a tensor product of the
monogenic ones in which the heights of generators are determined
by means of the action of the primary and secondary cohomology
operations on $H^*(X;\mathbb{Z}_p).$ An application for calculating
of the loop space cohomology ring of the exceptional group  $F_4$ is
given.

\end{abstract}

\maketitle

\section{Introduction}

Let $X$ be a simply connected topological space and
$ H^*(X; \mathbb{Z}_p)$ be  the cohomology algebra in coefficients
$\mathbb{Z}_p=\mathbb{Z}/p\mathbb{Z}$ where   $\mathbb{Z}$ is the
integers and   $p$ is a prime.

Given $n\geq1,$ let $P^*_n(X)\subset {H}^*(X;\mathbb{Z}_p)$ be the
subset of elements of finite height
\[P^{*}_n(X)=\{x\in {H}^* (X;\mathbb{Z}_p)\mid  x^{n+1}=0,\,n\geq 1 \},
\]
where for  an odd $p$ we have  $P^{od}_{1}(X)=H^{od}(X;\mathbb{Z}_p)$
in odd degrees.
 The subset of elements of infinite height is denoted by
 $P^*_{\infty}(X)\subset {H}^*(X;\mathbb{Z}_p).$

 Let $\mathcal{P}_1$ be the primary cohomology operation
\[ \mathcal{P}_1: H^m (X;\mathbb Z_p)  \rightarrow H^{(m-1)p+1}(X;\mathbb
Z_p),\ \ \
[c]\rightarrow \left[ c^{\smile_1 p}\right],
\]
where
  $c\in C^m(X;\mathbb Z_p)$   is a cocycle  and
  $c^{\smile_1 p}=c\smile_1\cdots \smile_1 c$ is a $(p-1)$ -- iteration
 of the Steenrod cochain $c\smile_1c$ -- operation.
For $r,n\geq 1$ we introduce  the secondary
cohomology operations
\begin{equation}\label{secondary}
\begin{array}{lll}
\psi_{r,1}:P^m_1 (X)\ \ \rightarrow H^{(m-1)p^{r+1}+1}(X;\mathbb
Z_p)/\operatorname{Im}\mathcal{P}_1,
\vspace{1mm}\\
\psi_{r,n}: P^m_{n>1}(X)\rightarrow H^{(m(n+1)-2)p^r+1 }
(X;\mathbb Z_p)/ \operatorname{Im}\mathcal{P}_1,
\end{array}
 \end{equation}
   in which $\psi_{1,p^k-1}=\psi_k$ is the Adams secondary cohomology
   operation for
$p$ odd or $p=2$ and $k>1$ (cf. \cite{adams-hopf1}, \cite{harper}, \cite{kraines2},   \cite{tangora}).
Note that the maps $\psi_{r,n}$ are
  linear for $n+1=p^k,\,k\geq 1.$

Roughly speaking, as the homotopy commutativity of the cup  product on\\
$ C^\ast(X;\mathbb Z_p)$ gives rise to
the primary cohomology operations, here the homotopy multiplicative
map between the bar-constructions $B H^\ast(X;\mathbb Z_p)\rightarrow B
C^\ast(X;\mathbb Z_p)$
induced by a cocycle-choosing map
 $ H^\ast(X;\mathbb Z_p)\rightarrow C^\ast(X;\mathbb Z_p)$
 gives rise to
the above secondary cohomology operations, where
   $B H^\ast(X;\mathbb Z_p)$   and $B C^\ast(X;\mathbb Z_p)$ are endowed by
   the standard   shuffle (commutative) product
and the standard (geometric, non-commutative) product respectively.

The secondary cohomology operations are related with the loop
cohomology ring
$H^\ast(\Omega X; \mathbb{Z}_p)$ as follows. Let $\Omega X$ be the
(based) loop space on $X,$ and let
\[
\sigma: H^m(X;\mathbb{Z}_p)\rightarrow H^{m-1}(\Omega X; \mathbb{Z}_p)
 \]
 be the loop suspension homomorphism.
Given $z\in H_\ast(\Omega X; \mathbb{Z}_p)$  and $m\geq 3,$ let the
symmetric Massey product of $z$ be defined and denoted by
$\left< z\right>^{m}$
\cite{kraines} (when
$X$ is itself an H-space, $\left< z\right>^{m}$ consists  of a
unique element; cf.\,\cite{kochman}).
For example, if $z=(\sigma x)^{\#}$ is the dual element of $\sigma x\neq 0,$
then   $\left< z\right>^{n+1}$ is defined and
is non-zero (when $n=1, \left< z\right>^{2} =z^2$). Define
an element $\omega_{1,n}(x)\in H^*(\Omega X; \mathbb{Z}_p)$ as the dual
element:
 \[   \omega_{1,n}(x)=  \omega^\#  \ \ \text{with}\ \
 \omega\in   \left< z\right>^{n+1},\ \   z=(\sigma x)^{\#},\ \ \sigma x\neq 0.
  \]
    Let  $\Delta: H^*(\Omega X; \mathbb{Z}_p)\rightarrow H^*(\Omega X;
    \mathbb{Z}_p) \otimes H^*(\Omega X; \mathbb{Z}_p)$
    be the coproduct and $\widetilde \Delta$ be  $\Delta$ without the
    primitive terms. 

 For $x\in P^*_n(X),$ 
    let  $\ell_x$  be the maximal integer with  $ 0\leq \ell_x\leq  \infty  $
  such that there exists
 a string of elements of $H^*(\Omega X; \mathbb{Z}_p)$
\begin{equation}\label{string}
(\omega_{0,n}(x),\, \omega_{1,n}(x),\, \omega_{2,n}(x),...,
\omega_{\ell_x,n}(x))\ \  \text{with}\ \    \omega_{0,n}(x):=\sigma x
 \end{equation}
satisfying  for $r\geq 1$ the  equalities
\begin{equation}\label{coprod}
\hspace{0.3in}
\widetilde \Delta(\omega_{r,n}(x))=\sum_
{\substack{0\leq i_1<\cdots < i_s < r\\ 0\leq j_1<\cdots <j_t<r}}\!\!\!
\omega_{i_1,n}\cdots\omega_{i_s,n}\otimes 
\omega_{j_1,n}\cdots\omega_{j_t,n}+  a'_r\otimes a''_r,
\end{equation}
 where  monomials are taken with respect to the product in  $H^*(\Omega X; \mathbb{Z}_p)$  with  $i_1,j_1>0$ for $n>1$  or for $\sigma x=0,$
   while a component $a'_r\otimes a''_r$ does not contain $\omega_{i,n}$'s in the both sides simultaneously;  
  in particular, 
$\widetilde \Delta(\omega_{1,n}(x))=0$ for $n>1.$

 The string may be infinite (i.e., $\ell_x=\infty$). The  construction
 of $\omega_{r,n}(x)$ is given in Section \ref{2operations}.
Let $\mathcal{H}^*_p(X)\subset H^*(X;\mathbb{Z}_p)$ be a subset of
 multiplicative generators, and let 
 \[   \mathcal {S}^*(X)  =\{x\in \mathcal{H}^*_p(X)\mid \sigma x\neq 0 \}\ \
 \text{with}\ \  \mathcal {S}_n^*(X)= \mathcal {S}^*(X)\cap P^*_n(X).
\]
 Denote
 \[\hspace{-1.22in}\Phi_0(X)=\{ \sigma x \mid x\in 
\mathcal{S}_n(X)\setminus \operatorname{Im} \mathcal{P}_1,\, 1\leq n\leq \infty \},\]
\[\Phi_n(X)=\{ \left(\, \omega_{1,n}(x),..., \omega_{\ell_x,n}(x)\right) \mid x\in
  P_{n}(X)\cap\mathcal{H}_p(X),\, 1\leq n < \infty 
\}.\]
Let $\mathcal{P}_1^{(m)}$ denote the $m$-fold composition
$\mathcal{P}_1\circ \cdots \circ \mathcal{P}_1,$
and let \[\mathcal{D}_p^*(X):=H^+(X;\mathbb Z_p)\cdot H^+(X;\mathbb
Z_p)\subset H^*(X;\mathbb Z_p)\] be the decomposables.
 For
 $w\in H^*(\Omega X; \mathbb{Z}_p)$ and $q\geq 1,$ denote by $w^q\in
 H^*(\Omega X; \mathbb{Z}_p)$ the $q$ - th  power of $w.$
 The expression $ f(\psi_{*,*}(x))$ means that a function $f$ is evaluated
 on a representative of $\psi_{*,*}(x).$ Also fix the convention 
 \begin{equation} \label{psi0}
\psi_{0,n}(x)= \begin{cases}\mathcal{P}_1(x),& n=1,\\   
0, & \text{otherwise}.
\end{cases}
\end{equation}
\begin{theorem}\label{premain}
Let $X$ be a simply connected topological space such that the
cohomology ring $H^*(X; \mathbb{Z}_p)$ is a Hopf algebra. Then
$\Phi_0(X)\cup \Phi_n(X)$ is
the set of  multiplicative generators  of
      $ H^*(\Omega X;\mathbb{Z}_p),$  $p\geq 2,$ and

      (i)  For  $k\geq 1$ and  $\sigma x\in \Phi_0(X)$
       unless $x\in \mathcal{H}^{ev}_{p>2}(X):$
\[ (\sigma x)^{\,p^k}=\sigma \mathcal{P}_1^{(k)}(x); \]

(ii) For $r,n,k\geq1$   and $\omega_{r,n}(x)\in \Phi_n(X):$

\begin{equation*} 
 \omega_{r,n}(x)^{\,p^k}= 
\sum_{\substack{i+j=r\\ i, j\geq 0}} \!\!   \omega_{i,n}(\psi_{j,n}(x))^{\,p^{k-1}} .
\end{equation*}

\end{theorem}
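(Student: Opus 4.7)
The plan is to work within the filtered Hirsch model framework alluded to in the keywords. One constructs a small multiplicative model $\rho\colon (RH,d)\to C^*(X)$ of the cochain algebra together with the homotopy multiplicative map $BH^*(X;\mathbb{Z}_p)\to BC^*(X;\mathbb{Z}_p)$ between bar constructions mentioned in the introduction; dualizing and passing to the cobar construction yields a tractable chain-level model for $H^*(\Omega X;\mathbb{Z}_p)$. In this framework, the secondary operations $\psi_{r,n}$ measure the failure of the cocycle-choosing map to preserve products on the bar construction, while the elements $\omega_{r,n}(x)$ arise as the $H^*(\Omega X)$ classes witnessing the symmetric Massey products $\left< z\right>^{n+1}$ whose construction is given in Section \ref{2operations}.

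I would first establish the generation statement. Using the filtered Hirsch model, every class in $H^*(\Omega X;\mathbb{Z}_p)$ is a polynomial in images of the model's generators, and Borel's structure theorem for graded connected Hopf algebras over $\mathbb{Z}_p$ organizes these into a tensor product of monogenic pieces. The indecomposables then split according to whether they arise from $\sigma x$ with $x$ of infinite height (contributing $\Phi_0$) or from Massey-type witnesses attached to finite-height generators (contributing $\Phi_n$).

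For part (i), the argument is essentially Kudo's transgression theorem applied to the path-loop fibration $\Omega X\to PX\to X$. Since $\sigma x$ transgresses to $x$ and $\mathcal{P}_1$ is realized on the cochain level by the $(p-1)$-fold iterated $\smile_1$, the identity $\tau((\sigma x)^p)=\mathcal{P}_1(x)$ holds in the Serre spectral sequence, and iterating gives the formula for all $k\geq 1$. The exclusion of $x\in\mathcal{H}^{ev}_{p>2}(X)$ is forced because $\sigma x$ is then odd-dimensional and graded commutativity makes $(\sigma x)^2=0$ automatically.

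The main obstacle is part (ii), which I would prove by induction on $k$, reducing to the base case $k=1$. Granting the base case, the formula for $k\geq 2$ follows from additivity of the Frobenius $y\mapsto y^p$ on the graded commutative algebra $H^*(\Omega X;\mathbb{Z}_p)$:
\[
\omega_{r,n}(x)^{p^{k}}=\bigl(\omega_{r,n}(x)^{p}\bigr)^{p^{k-1}}=\sum_{i+j=r}\omega_{i,n}(\psi_{j,n}(x))^{p^{k-1}}.
\]
For the base case I would compute $\widetilde\Delta(\omega_{r,n}(x)^p)=(\widetilde\Delta \omega_{r,n}(x))^p$ using the coproduct formula (\ref{coprod}) and match it against $\widetilde\Delta\bigl(\sum_{i+j=r}\omega_{i,n}(\psi_{j,n}(x))\bigr)$. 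These coproducts agree up to a primitive error, and identifying that primitive through the bar construction — using the defining property of $\left< z\right>^{n+1}$ together with the construction of $\psi_{j,n}$ as the obstruction class in $H^{*}(X;\mathbb{Z}_p)/\operatorname{Im}\mathcal{P}_1$ — pins down the claim. The delicate point is uniformity of this primitive identification in $r$ and $n$, and this is exactly where the filtered Hirsch model is indispensable, because it produces canonical cochain-level representatives of $\omega_{r,n}(x)$ and $\psi_{r,n}(x)$ simultaneously at all levels of $r$.
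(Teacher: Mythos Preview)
Your reduction of part (ii) to the base case $k=1$ by taking $p^{k-1}$-th powers matches the paper exactly, and your treatment of part (i) is close in spirit: the paper derives $(\sigma x)^p=\sigma\mathcal{P}_1(x)$ directly from the fact that the product on the small model $\overline V$ is $\bar a\,\bar b=\overline{a\smile_1 b}$ (equation (\ref{susp})), which is the cochain-level content of the Kudo argument you invoke.

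The gap is in the base case $k=1$ of part (ii). Matching the coproducts of $\omega_{r,n}(x)^p$ and $\sum_{i+j=r}\omega_{i,n}(\psi_{j,n}(x))$ only shows they differ by a primitive, and your sketch gives no mechanism for identifying that primitive; ``the bar construction together with the defining property of $\langle z\rangle^{n+1}$'' is not a computation. The paper does not argue via coproducts at all. It works entirely inside the minimal model $(\overline V,\bar d_h)$: by construction $\omega_{r,n}(x)=[t_p(\bar x_{k_r-1})]$, so $\omega_{r,n}(x)^p$ is literally represented by $\overline{x_{k_r-1}^{\smile_1 p}}$. The whole proof then consists of two explicit identifications (Propositions \ref{smilepowers} and \ref{cw}):
\[
\left[t_p\!\left(\overline{x_{k_r-1}^{\smile_1 p}}\right)\right]
=\sum_{\epsilon_x\le m\le r}\left[t_p\!\left(\overline C^{\,\mathbf p_m}_{k_r,\ell_r}(x)\right)\right],
\qquad
\left[t_p\!\left(\overline C^{\,\mathbf p_m}_{k_r,\ell_r}(x)\right)\right]=\omega_{r-m,n}(\psi_{m,n}(x)).
\]
The elements $C^{\mathbf p_m}_{k_r,\ell_r}(x)$ come from the perturbation $h$ applied to the $(g,f)$-coderivation homotopy $\mathbf b$ of subsection \ref{derhomotopy}, and their transgressive parts $c_{k,\ell}(x)=-h^{tr}(b_{k,\ell}(x))$ are precisely what defines $\psi_{j,n}(x)$. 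In other words, the secondary operations and the generators $\omega_{r,n}$ are manufactured from the \emph{same} cochains $b_{k,\ell}(x)$, so the base-case identity is an equality of representatives, not a comparison of Hopf-algebra shadows. Your proposal never names these objects, and without them the ``primitive error'' you isolate cannot be shown to vanish.
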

From Theorem \ref{premain} immediately follows

\begin{theorem}\label{main}
  Let $X$ be a simply connected topological space such that the
  cohomology ring  $H^*(X; \mathbb{Z}_p)$ is a Hopf algebra.

(i)  For  $k\geq 1$ and  $\sigma x\in \Phi_0(X) ,$
 let $\nu_0(x)\geq1$ be the smallest integer such that  $
 \mathcal{P}^{(\nu_0(x))}_1 (x)\in \mathcal{D}^*_p(X)$ unless $ x\in
 \mathcal{H}^{ev}_{p>2}(X).$
 Then
\[ (\sigma x)^{p^{\nu_0(x)}}=0; \]

(ii) For $r,n,k
\geq1$   and $x\in P^*_n(X),$  let    $\nu_{j,n}(x)\geq1$
be the smallest integer     with  $0\leq j\leq r$ such that

 \[  \mathcal{P}_1^{(\nu_{j,n}(x)-1)}\left(\psi_{j,n}(x)\right)      \in \mathcal{D}^*_p(X)\ \   \text{for all}\ \  j.  \]

\noindent
       Then $p^{\nu_{r,n}(x)}$ is the height of the generator
     $\omega_{r,n}(x)\in \Phi_n(X)$
       \[
\omega_{r,n}(x)^{\,p^{\nu_{r,n}(x)}}=0.\]

\end{theorem}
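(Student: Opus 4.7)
Statement (i) is essentially a one-line consequence of Theorem \ref{premain}(i): applying that identity with $k=\nu_0(x)$ yields $(\sigma x)^{p^{\nu_0(x)}}=\sigma\,\mathcal{P}_1^{(\nu_0(x))}(x)$, and the hypothesis places the right-hand side inside $\sigma(\mathcal{D}_p^*(X))$, which is zero by the classical vanishing of the loop suspension on decomposables. Hence $(\sigma x)^{p^{\nu_0(x)}}=0$.

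For (ii) I would argue by induction on $r\ge 0$, with the base case $r=0$ being exactly (i) applied to $\psi_{0,n}(x)$ via the identification $\omega_{0,n}(y)=\sigma y$. For the inductive step, set $k=\nu_{r,n}(x)$ in Theorem \ref{premain}(ii) to obtain
\[
\omega_{r,n}(x)^{\,p^{\nu_{r,n}(x)}}=\sum_{\substack{i+j=r\\ i,j\ge 0}}\omega_{i,n}(\psi_{j,n}(x))^{\,p^{\nu_{r,n}(x)-1}},
\]
and show that every summand vanishes. When $i=0$, the summand reads $(\sigma\psi_{j,n}(x))^{p^{\nu_{r,n}(x)-1}}$, and (i) rewrites it as $\sigma\,\mathcal{P}_1^{(\nu_{r,n}(x)-1)}(\psi_{j,n}(x))$; because $\nu_{r,n}(x)\ge\nu_{j,n}(x)$ for $0\le j\le r$ by the way the invariants are coupled in the definition, the argument of $\sigma$ is decomposable and $\sigma$ kills it. When $i>0$, I would invoke the inductive hypothesis on the generator $\omega_{i,n}(\psi_{j,n}(x))$ living in the string associated with $\psi_{j,n}(x)$, whose height is then controlled by the corresponding $\nu$-invariants. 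The boundary cases $j=0$ are handled by the convention \eqref{psi0}: the term vanishes outright for $n>1$, and for $n=1$ it reduces to $\omega_{r,1}(\mathcal{P}_1(x))^{p^{\nu_{r,n}(x)-1}}$, which gets folded into the induction after substitution $x\mapsto\mathcal{P}_1(x)$.

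\textbf{The main obstacle} is the bookkeeping that ties $\nu_{r,n}(x)$ to all of the $\nu_{j,n}(x)$ with $j\le r$: the induction unfolds each $\omega_{i,n}(\psi_{j,n}(x))$ into a new string governed by the $\nu$-invariants of $\psi_{j,n}(x)$, so one must justify that these are subsumed by the single integer $\nu_{r,n}(x)$, effectively reading the definition as $\nu_{r,n}(x)=\max_{0\le j\le r}\nu_{j,n}(x)$. A secondary subtlety is sharpness: the phrase ``$p^{\nu_{r,n}(x)}$ is the height'' requires the matching lower bound $\omega_{r,n}(x)^{p^{\nu_{r,n}(x)-1}}\ne 0$. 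By the same expansion this reduces to the nontriviality of $\sigma\,\mathcal{P}_1^{(\nu_{r,n}(x)-2)}(\psi_{r,n}(x))$, which should follow from the minimality clause in the definition of $\nu_{r,n}(x)$ together with the injectivity of $\sigma$ on primitive indecomposables, available under the Hopf algebra hypothesis on $H^*(X;\mathbb{Z}_p)$.
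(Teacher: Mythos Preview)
Your proposal is correct and follows the paper's approach exactly: the paper's entire proof of Theorem~\ref{main} is the single line ``From Theorem~\ref{premain} immediately follows,'' so deducing (i) from Theorem~\ref{premain}(i) together with $\mathcal{D}_p^*(X)\subset\operatorname{Ker}\sigma$, and (ii) by iterating Theorem~\ref{premain}(ii), is precisely what is intended. Your explicit inductive unpacking of (ii), and your observations about the bookkeeping among the $\nu_{j,n}(x)$ and about the sharpness of the height, are legitimate elaborations of details the paper leaves implicit rather than departures from its argument.
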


A particular case of the theorem for $p=2$ and $n=\infty$   is proved
in \cite{sanePOL}.
Furthermore, we have an explicit description of the set $\mathcal{S}^*_n(X)$
in terms  of the operations $\mathcal{P}_1$ and  $\psi_{1,n}$  and
higher order Bockstein homomorphisms $\beta_k$
associated with the short exact  sequence
\[
0\rightarrow \mathbb{Z}_p\rightarrow \mathbb{Z}_{p^{k+1}} \rightarrow
\mathbb{Z}_{p^k}\rightarrow 0
\]
in the following  theorem being in fact a reformulation of the result
in   \cite{kraines2}
(cf. also  \cite{clark}, \cite{kochman}).
Let
\begin{equation}\label{ex} \epsilon_x=\begin{cases}
      0, & x\in P^{od}_1(X)\ \ \text{and}\ \  p>2,     \\
         1, &  \text{otherwise} .
       \end{cases}
       \end{equation}
         
\begin{theorem}\label{sigmazero}
Let   $\mathcal{I}^*_p (X)= \mathcal{I}^*_{p,1}(X)\cup
\mathcal{I}^*_{p,2}(X) \subset \mathcal{H}^*_p(X)$
be the subset defined   for $\ell_x \geq \epsilon_x $   by
\begin{equation*}
\begin{array}{ll}
 \mathcal{I}^*_{p,1} (X)\!=\!
 \left\{y\in \mathcal{H}^\ast_p(X) \mid\!
  y=\beta_{_{\ell_x+1}}\mathcal{P}^{(\ell_x+1)}_1(x), \hspace{0.37in}
  x\in \mathcal{H}_p(X)\cap P^{od}_1(X)\right\},
  \vspace{1mm}\\
 \mathcal{I}^*_{p,2}(X)\!= \!\left\{y\in \mathcal{H}^\ast_p(X) \mid
 \! y=\beta_{_{\ell_x}}\!\mathcal{P}^{(\ell_x-1)}_1\!\psi_{1,n}(x),\,\,  x\in
  \mathcal{H}_p(X)\cap (P^\ast_{n}(X)\!\setminus\! P^{od}_1(X))\!\right\}.
  \end{array}
\end{equation*}
Then
 $\operatorname{Ker}\sigma=\mathcal{I}^*_p(X)\cup\mathcal{D}^*_p(X);$
hence,
$\mathcal{S}^*_n(X)=(P^*_n(X)  \setminus \mathcal{I}^*_p(X))\cap\mathcal{H}_p^*(X) .
$
\end{theorem}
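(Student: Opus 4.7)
The plan is to recognize Theorem \ref{sigmazero} as a reformulation of Kraines's classification in \cite{kraines2} of $\ker\sigma$ in terms of higher Bockstein operations on iterates of $\mathcal{P}_1$. First, I would dispose of the standard inclusion $\mathcal{D}^*_p(X)\subset \ker\sigma$, which is automatic since the loop suspension vanishes on cup products; this reduces the question to identifying exactly which multiplicative generators in $\mathcal{H}^*_p(X)$ are annihilated by $\sigma$.

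Next, I would invoke the duality, set up in Section \ref{2operations} and recorded in the coproduct formula (\ref{coprod}), between the string $(\omega_{0,n}(x),\ldots,\omega_{\ell_x,n}(x))$ and the iterated symmetric Massey powers $\langle z\rangle^{p^{r+1}}$ of $z=(\sigma x)^{\#}$. Maximality of $\ell_x$ then means precisely that $\omega_{\ell_x+1,n}(x)$ fails to exist because the next symmetric Massey power has only an indeterminate dual, and by Kraines's theorem this obstruction is detected on $H^*(X;\mathbb{Z}_p)$ by the action of a higher Bockstein $\beta_k$ on a suitable iterate of $\mathcal{P}_1$ applied to either $x$ itself or $\psi_{1,n}(x)$.

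I would then read off the Bockstein orders case by case. For $x\in P^{od}_1(X)$ the tower is built out of $\mathcal{P}_1$ alone starting from $x$ (consistent with the convention $\psi_{0,1}(x)=\mathcal{P}_1(x)$ in (\ref{psi0})), and the obstruction appears as $y=\beta_{\ell_x+1}\mathcal{P}_1^{(\ell_x+1)}(x)$, producing $\mathcal{I}^*_{p,1}(X)$. For $x\in P^*_n(X)\setminus P^{od}_1(X)$ with $n>1$ the tower is initiated by the secondary operation $\psi_{1,n}(x)$, so one fewer application of $\mathcal{P}_1$ is required and the obstruction becomes $y=\beta_{\ell_x}\mathcal{P}_1^{(\ell_x-1)}\psi_{1,n}(x)$, producing $\mathcal{I}^*_{p,2}(X)$. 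In each case $y$ is an indecomposable generator lying in $\ker\sigma$, and conversely Kraines's classification shows that every generator in $\ker\sigma\setminus \mathcal{D}^*_p(X)$ has this form. The identity $\mathcal{S}^*_n(X)=(P^*_n(X)\setminus \mathcal{I}^*_p(X))\cap \mathcal{H}^*_p(X)$ is then a formal complementation.

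The main obstacle is the careful bookkeeping of the Bockstein orders and the small but crucial index shift between the primary-only tower and the one initiated by $\psi_{1,n}$; this shift is exactly what accounts for the difference between $\beta_{\ell_x+1}\mathcal{P}_1^{(\ell_x+1)}$ in $\mathcal{I}^*_{p,1}(X)$ and $\beta_{\ell_x}\mathcal{P}_1^{(\ell_x-1)}\psi_{1,n}$ in $\mathcal{I}^*_{p,2}(X)$. The parity distinction encoded by $\epsilon_x$ in (\ref{ex}) must also be checked in both cases to confirm that $\ell_x\geq \epsilon_x$ is the precise nontriviality range, the subtlety coming from the well-known difference between odd-degree primitives at $p=2$ and at odd primes.
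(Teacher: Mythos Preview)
Your plan is to treat the theorem as a black-box citation of Kraines \cite{kraines2}, plus some bookkeeping. The paper itself acknowledges that Theorem~\ref{sigmazero} is a reformulation of Kraines, but its proof does not invoke Kraines at all: it is a direct computation inside the filtered Hirsch model $(RH,d_h)$. The key input is the relation $dc^{1,1}_{k,\ell}(x)=\alpha^{1,1}_{k,\ell}(x)\,y_{k+\ell-1}$ from (\ref{bc}); one reads off the $p$-adic valuation of the binomial $\alpha^{1,1}_{k,\ell}(x)$ for the specific pairs $(k,\ell)=(1,p^{\ell_x+1}-1)$ or $(2,2(p^{\ell_x}-1))$, which gives the Bockstein order $\ell_x+\nu_x$, and then identifies $[t_p(c_{k,\ell}(x))]$ with $\mathcal{P}_1^{(\ell_x+1)}(x)$ or $\mathcal{P}_1^{(\ell_x-1)}\psi_{1,n}(x)$ by direct calculation in $RH$. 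The converse direction (every indecomposable $z\in\ker\sigma$ arises this way) is argued by observing that any such $z$ is represented by a $y_m$ that first leaves $\mathcal{D}^*_p(RH)$, which pins down an $x$ and its $\ell_x$ via (\ref{ellx}).

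Your route has a genuine gap at the matching step. The objects in the statement---$\psi_{1,n}$ and $\ell_x$---are defined \emph{in this paper} through the Hirsch model (equations (\ref{psiprime})--(\ref{psiprime2}) and (\ref{ellx})); they do not appear in Kraines in this form. Saying ``by Kraines's theorem this obstruction is detected by $\beta_k$ on a suitable iterate of $\mathcal{P}_1$ applied to either $x$ or $\psi_{1,n}(x)$'' presupposes that the paper's $\psi_{1,n}(x)$ coincides with whatever secondary class Kraines uses, and that the integer $\ell_x$ of (\ref{string})/(\ref{ellx}) equals the Bockstein index in Kraines's formulation. That identification is exactly the content of the Hirsch-model computation via (\ref{bc}), and your proposal does not supply it. Also, your phrase ``iterated symmetric Massey powers $\langle z\rangle^{p^{r+1}}$'' is not the relevant object here: the string $(\omega_{0,n},\dots,\omega_{\ell_x,n})$ is dual to the single $(n{+}1)$-fold symmetric product and its transgressive prolongations, not to $p^{r+1}$-fold ones. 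Without the model computation you cannot pin down the precise orders $\ell_x+1$ versus $\ell_x$ and the exponents $\ell_x+1$ versus $\ell_x-1$ on $\mathcal{P}_1$, which is the whole point of the two cases $\mathcal{I}^*_{p,1}$ and $\mathcal{I}^*_{p,2}$.
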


The method of calculating of the  loop space cohomology relies on the
integral filtered Hirsch model
of a space $X$
\[\varphi: (R_aH^\ast(X),d_h)\rightarrow C^*(X;\mathbb{Z}) \]
constructed in \cite{saneFiltered} (cf.\,\cite{hal-sta},
\cite{hueb-kade}). However,  the   mod p  filtered Hirsch  algebra
\[ (RH^\ast(X),d_h)\otimes \mathbb{Z}_p\]
for the minimal Hirsch resolution $RH^\ast(X)$ of $H^\ast(X)$
 described here has a quite simple form when   $H^\ast(X; \mathbb Z_p)$ is
 a Hopf algebra.
The fact that $ RH^\ast(X)$ is a free $\mathbb{Z}$-algebra
 enables to control  the higher order Bockstein maps $\beta_k$ to
 establish all necessary formulas therein (compare \cite{browder}), and
 then to pass to mod p coefficients.

The paper is organized as follows. Section 2 reviews the filtered  Hirsch models.
In Section 3 the secondary cohomology operations are constructed. In Section 4 Theorems 1 and 3 are proved. The mod 2 and 3 loop cohomology ring of the exceptional Lie group $F_4$ is calculated in Section 5.

\section{The Hirsch filtered models of a space}\label{shirsch}
Given a commutative graded $\mathbb{Z}$\,-\,algebra (cga) $H^\ast$
there are two kinds of  Hirsch resolutions of $H^*:$
\[\rho_a:(R_aH^\ast,d)\rightarrow H^\ast,  \ \ \text{the absolute Hirsch resolution }    \]
and
\[
\rho:(RH^\ast,d)\rightarrow H^\ast, \ \  \text{the minimal Hirsch
resolution}.    \]
 In particular, the both resolutions also serve as
free additive resolutions of the graded group $H^\ast$  (but not
necessarily short ones!),  so that both $R_aH^\ast$  and $RH^\ast$ are
free groups additively.
The absolute Hirsch resolution $R_aH,$ beside  the Steenrod cochain
operation $E_{1,1}=\, \smile_1 ,$ the cup-one product, that
 measures  the non-commutativity of the cup-product,  is endowed with
 the higher
order operations\\
 $E_{p,q},\,p,q\geq 1$  as they usually exist in the
(simplicial, cubical) cochain complex $C^*(X;\mathbb Z).$ These
operations  appear  to measure the deviations of the cup-one product
from left
and right derivation with respect to  the cup-product, and so on.
The minimal Hirsch resolution $RH^\ast$ is in fact endowed     with
only  binary operation $E_{1,1}$ related
 with the cup-product  by explicit formulas (cf.
 (\ref{hirsch1})--(\ref{hirsch2}) below). Consequently,
 there is no immediate map  $(RH^\ast,d_h)\rightarrow
 C^*(X;\mathbb{Z})$
 of Hirsch algebras   for
 $H^\ast:=H^\ast(X;\mathbb Z) ,$ instead there are the
   zig-zag Hirsch algebra maps
\begin{equation}\label{zig-zag}
 (RH^\ast,d_h) \overset {\varrho}{\longleftarrow}
 (R_aH^\ast,d_h)\overset{\varphi}{\longrightarrow} C^*(X;\mathbb{Z}).
 \end{equation}
 In fact, $RH^\ast=R_aH^\ast/J$ for a certain Hirsch ideal $J\subset
 R_aH^\ast,$ so that the freeness of the multiplicative structure in
 $RH^*$ is kept. Then
 \[  (RH^\ast\otimes \mathbb{Z}_p,d_h) \overset{\varrho\otimes
 1}{\longleftarrow}  (R_aH^\ast\otimes \mathbb{Z}_p,
 d_h)\overset{\varphi\otimes 1}{\longrightarrow} C^*(X;\mathbb Z_p) \]
 is  regarded as the mod p  filtered  Hirsch  model of $X.$

\subsection{A minimal complex to calculate $H^*(\Omega X;\mathbb{Z}_p)$
}

For a $\Bbbk$ -- module $A,$ let $T(A)=\bigoplus_{i=0}^{\infty }A^{\otimes i}$
with $
A^{0}=\Bbbk $ be the tensor module of $A$. An element $a_{1}\otimes
\cdots
\otimes a_{n}\in A^{\otimes n}$ is denoted by $[a_{1}|\cdots |a_{n}]$
when $%
T(A)$ is viewed as the tensor coalgebra or by $a_{1}\cdots a_{n}$ when
$T(A)$
is viewed as the tensor algebra. We denote by $s^{-1}A$ the
desuspension of $%
A$, i.e., $(s^{-1}A)^{i}=A^{i+1}$.

A dga $(A,d_{A})$ is assumed to be supplemented: it has
the
form $A=\widetilde{A}\oplus \Bbbk .$ The (reduced) bar-construction $BA$ on
$A$
is the tensor coalgebra $T(\bar{A}),\ \bar{A}=s^{-1}\widetilde{A},$ with
differential $d=d_{1}+d_{2}$ given for $[\bar{a}_{1}|\dotsb
|\bar{a}_{n}]\in
T^{n}(\bar{A})$ by
\begin{equation*}
d_{1}[\bar{a}_{1}|\dotsb |\bar{a}_{n}]=-\sum_{1\leq i\leq
n}(-1)^{\epsilon
_{i-1}^{a}}[\bar{a}_{1}|\dotsb |\overline{d_{A}(a_{i})}|\dotsb
|\bar{a}_{n}]
\end{equation*}%
and
\begin{equation*}
d_{2}[\bar{a}_{1}|\dotsb |\bar{a}_{n}]=-\sum_{1\leq i<n}(-1)^{\epsilon
_{i}^{a}}[\bar{a}_{1}|\dotsb |\overline{a_{i}a_{i+1}}|\dotsb
|\bar{a}_{n}],
\end{equation*}%
where $\epsilon _{i}^{x}=|x_{1}|+\cdots +|{x_{i}}|+i.$

  We have  $RH=T(V)$ for some $V.$     Denote
  $\overline{V}=s^{-1}(V^{>0})\oplus
{\mathbb Z} ,$ and form
the cochain complex $(\,\overline{V},\bar{d}_h)$
with
 the differential
 $\bar {d}_h:=\bar d+\bar h$ on $\overline{V}$  obtained     by the
 restriction of $d_h=d+h$ to $V.$
The bar-construction $(B(RH), \mu_e)$ is a dga with the product $\mu_e$
defined  by the binary operation
 $e:=E_{1,1}$  on $RH$
  (cf. \cite{saneFiltered}).
Let  \[\phi: B(RH)\rightarrow \overline{RH} \rightarrow \overline{V}\]
 be the standard projection of cochain
complexes. Convert $\phi$ as a map of dga's by introducing  a product
on   $\overline{V}.$ Namely,  for $\bar a, \bar b\in \overline{V}$
define
 \[
 \bar a\bar b=\overline {a\smile_1b} \ \ \  \text{with}\ \ \
  \bar a1=1\bar a=\bar a.
   \]
 Then we get  the following sequence of algebra isomorphisms
\begin{multline*}
 H^\ast\!\left(BC^{\ast}(X;\mathbb Z),d_{_{BC}},\mu_{_{E}}\right)
 \underset{\approx}{\xleftarrow{B\varphi^\ast} }
 H^\ast\!\left(B(R_aH),d_{_{B(R_aH)}},\mu_{_{E}}\right)
 \underset{\approx}{\xrightarrow{B\varrho^\ast}}\\
H^\ast\!\left(B(RH),d_{_{B(RH)}},{\mu}_{e}\right)
\underset{\approx}{\xrightarrow{\phi^*}}
H^\ast\!\left(\,\overline{V}, \bar d_h\right),
\end{multline*}
where $\mu_{_E}$ denotes the products on $BC(X;\mathbb Z)$ and $B(R_aH)$
defined by the Hirsch structural operations $E:=\{E_{p,q}\}_{p,q\geq
1}$
on  $C^*(X;\mathbb Z)$ and $R_aH$ respectively (cf. \cite{saneFiltered}).
 Thus  we get the algebra isomorphisms (compare \cite{F-H-T})
 \begin{equation*}
H^{\ast }(\,\overline{V},\bar{d}_{h})\approx H^{\ast }(BC^{\ast }
(X;\mathbb Z),d_{_{BC}})\approx H^{\ast }(\Omega X;\mathbb Z ).
\end{equation*}
 Consequently,
 the calculation of the multiplicative structure of
$H^*(\Omega X;\mathbb{Z}_p)$ reduces to that of
$H^*(\,\overline{V},\bar d_{h}).$
Let
 \[t_p:  A\rightarrow A\otimes \mathbb Z_p,\ a\rightarrow a\otimes 1, \]
 be the standard map, and obtain an algebra isomorphism
\[  H^\ast\left(t_p(\,\overline{V}),\bar d_h\right)\approx  H^*(\Omega
X;\mathbb Z_p).   \]
Furthermore, there is the coproduct
 \[\Delta: \overline{V}\rightarrow\overline{V}\otimes \overline{V}\]
 determined  for  $\bar x\in \overline{V}$ by the quadratic components of $d_h(x)$  in $RH,$ so that $ (\overline{V},\bar d_h)$ becomes a dg Hopf algebra.
 Consequently,
 the calculation of the Hopf algebra  structure of
$H^*(\Omega X;\mathbb{Z}_p)$ reduces to that of
$H^*(\,t_p(\overline{V}),\bar d_{h}).$

Note that the loop suspension homomorphism $\sigma$ is induced by
the composition map
\begin{equation}\label{sigma}
 RH \xrightarrow{pr} V\xrightarrow{s^{-1}}  \overline{V}, \ \ a\rightarrow
 \overline{pr(a) } ,
\end{equation}
and we immediately deduce the equality
\begin{equation}\label{susp}
 (\sigma x)^{p}=\sigma \mathcal{P}_1(x),\ \ \   x\in
\mathcal{S}^*(X)  ,
\end{equation}
and
the inclusion $\mathcal{D}^*_p(X)\subset \operatorname{Ker} \sigma$ as
well.

\subsection{The $(g,f)$-coderivation chain homotopy $\mathbf{b}$}\label{derhomotopy}

Given a cdga $H^\ast$ over $\mathbb{Z},$
let $\rho: (RH^\ast,d)\rightarrow H^\ast $ be a  multiplicative
resolution.   Let
\[
g: H^* \hookrightarrow R^*H^*\,\, \text{with}\,\,    g(x)\in R^0H^*\]
be a cocycle-choosing map; in general, this map is not additive
 homomorphism, but we only need its restriction to multiplicative
 generators $x\in \mathcal{S}^*_n,$ and can fix for $1\leq q\leq n$ 
 \[ g(x^q)=  g(x)^q . \]

Denoting $A:= (R^\ast H^\ast, d),$ let the bar-costruction
 $BA$ be  endowed with the    $\mu_{e}$
   product, while $BH$ with
   the shuffle $\ast:=sh_{\!_H}$ product.
   Since $R^* H^*$ is acyclic in negative
   resolution degrees (cf. subsection \ref{minimal}), the map $g$ extends to
   a comultiplicative map 
   \begin{equation}\label{fchain}
     \bar f=\{\bar f_k\}: BH\rightarrow  BA\ \  \text{with}\  \ f_k: B_kH \xrightarrow{\bar f_k}  \bar{A}\xrightarrow{s} A ,\  \ k\geq 1\ \ \text{and}\ \ \bar f_1=\bar g
       \end{equation}
       such that $\bar f$ is compatible with the differentials.
 Furthermore, $\bar f$ is  a
   homotopy multiplicative map, i.e., denoting
   \[
   g_1:= \bar f \circ sh\!_{_H}: BH\otimes BH\rightarrow BH   \rightarrow
   BA \]  and  \[g_2:= \mu _{e} \circ (\bar f\otimes \bar f ):  BH\otimes BH
   \rightarrow BA\otimes BA\rightarrow BA, \]
    there is
   a chain homotopy $\mathbf{b}:BH\otimes BH\rightarrow BA$  with
 \begin{equation}\label{bchain}
  g_2- g_1=d_{_{BA}} \mathbf{b}+\mathbf{b} \,d_{_{BH\otimes BH}}.
\end{equation}
Using again cofreeness
of $(BA,\Delta)$  choose $\mathbf{b}$ to be $(g_2,g_1)$ -- coderivation,
i.e., the diagram commutes
\begin{equation}\label{bcomult}
\begin{array}{ccc}
( BH\otimes BH)\otimes ( BH\otimes BH) &\xrightarrow{\mathbf{b}\otimes
g_1+g_2\otimes \mathbf{b}}  &  BA\otimes BA\\
    \hspace{-0.3in}_{\Delta \otimes \Delta} \uparrow & &
      \hspace{0.13in} \uparrow _{\Delta}      \\
 BH\otimes BH   & \xrightarrow{\mathbf{b}} & BA.
\end{array}
\end{equation}
In fact we need the values of $f$ and $\mathbf{b}$ on the following
special elements
of $BH$  and $BH\otimes BH.$ Namely, for a multiplicative generator
$x\in H$ with $\lambda x^{n+1}=0,\, n,\lambda \geq 1$ (e.g.,  $n=1$   and
$\lambda=2$ for an odd dimensional $x\in H^{od}(X)$),
 denote
\[
\begin{array}{ll}
u^1_k(x):=\begin{cases}

          [ \bar
         x\,|\,\overline{x^n}\,|\ldots|\,\bar{x}\,|\,\overline{x^n}\,|\,\bar
         x\,]\in B_kH, &  k \ \  \text{is odd}, \\
        [ \,\bar x\,|\,\overline{x^n}\,|\ldots|\,\bar x
        \,|\,\overline{x^n}\,]\in B_kH , & k\ \   \text{is even},
       \end{cases}
      \vspace{1mm} \\
            u^2_k(x):=\begin{cases}

          [
          \,\overline{x^n}\,|\,\bar
          x\,|\ldots|\,\overline{x^n}\,|\,\bar{x}\,|\,\overline{x^n}\,]\in
          B_kH, &  k\ \  \text{is odd}, \\
[ \,\overline{x^n}\,|\,\bar
        x\,|\ldots|\,\overline{x^n}\,|\,\bar x\,]\in B_kH , & k\ \
        \text{is even},
       \end{cases}
       \end{array}
       \]
and
\[x_{k-1}:=
\begin{cases}\,
          f_k(u^1_k(x))           & k\geq 1, \\
          \, f_k(u^2_k(x)), & k\ \   \text{is even},
        \end{cases}
           \]
           \[
           \!\!\! {x'}_{k-1}:= \lambda f_{k}(u^2_{k}(x)), \ \
           k\ \  \text{is odd}; \]
           precisely, 
           \begin{equation}\label{xprime}
 x'_{k-1}=\sum _{\substack{i_1+\cdots +i_n=s\\ i_j,s\geq 0}} \lambda\,
 x_{2i_1}\cdots x_{2i_n}\ \text{for}\  k=2s+1.
 \end{equation}
  In particular, $f_1([\,\bar x\,])= x_0$    and
           $f_1([\,\overline{x^n}\,])=x_0^n.$
   In general,   denote
   \[X^i_{k-1}:=
   f_k(u^i_k(x))\ \  \text{for all}\ \ k\geq 1\ \
   \text{and}\ \    i=1,2,  \]
  and
  \[
  {X^i_{k-1}\star X^j_{\ell-1}}:=\lambda
  f_{k+\ell}(u^i_k(x)\ast u^j_\ell(x))   \]
   where
      $u^i_k(x)\ast u^j_\ell(x)\in B_{k+\ell}H $ is the shuffle
      product.
      Then
      \begin{equation}\label{star}
      X^i_{k-1}\star X^j_{\ell-1}\!=\!\left\{
      \begin{array}{llll}
   \alpha_{k,\ell}(x)\,x_{k+\ell-1}, & \!x\in P^*_1, \vspace{1mm}\\
   \alpha^{i,j}_{k,\ell}(x)\,x_{k+\ell-1}+2v^{i,j}_{k,\ell}(x), &\! x\in
   P^*_{n>1}, (i,j)=(1,1),(2,2)\\
   & \text{or}\  k+\ell\ \text{is
   even},\vspace{1mm} \\
    \alpha^{i,j}_{k,\ell}(x)\,x'_{k+\ell-1}+ 2v_{k,\ell}^{i,j}(x), &
    \text{otherwise},
     \end{array}
     \right.
      \end{equation}
      where for $n=1:$
\[
  \alpha_{k,\ell}(x)=\left\{\begin{array}{llll}
   \binom{k+\ell}{k}, &    |x|\ \ \text{is odd}, \vspace{1mm} \\

      \binom{k+\ell-1/2}{k/2}, &     |x|\ \ \text{is even},\,\,
       k\  \text{is even},\,  \ell\  \text{is odd},
                               \vspace{1mm}\\
     \binom{k+\ell/2}{k/2},&
      |x|\ \ \text{is even},\
      k,\ell \ \text{are even},
               \vspace{1mm}\\
      0, &  |x|\ \ \text{is even},\ k,\ell\
                             \text{are odd},
                           \end{array}
                           \right.
  \]
  and for $n>1:$
 \[
  \alpha^{1,1}_{k,\ell}(x)=\alpha^{2,2}_{k,\ell}(x)=
  \left\{\begin{array}{llll}

                              \binom{k+\ell-1/2}{\ell/2}, &
                             k\  \text{is odd},\,  \ell\  \text{is even},
                                                           \vspace{1mm}\\
               \binom{k+\ell/2}{k/2},& k,\ell \ \text{are even},
               \vspace{1mm}\\
                             0, & k,\ell\  \text{are odd}, \\
                           \end{array}
                           \right.
  \]

     \[
  \alpha^{1,2}_{k,\ell}(x)=
  \left\{\begin{array}{llll}

     2,& |x|\  \text{is odd},\ \  k=1 ,\\

                              0,& |x|\  \text{is even},\ k=1 ,\vspace{1mm}\\

               \alpha^{1,1}_{k-1,\ell-1}(x)+1,& k,\ell>1 \ \text{are odd},
               \vspace{1mm}\\
                   \alpha^{1,1}_{k-1,\ell}(x)         , & k>1\ \text{is
                     odd},
                                          \ell\  \text{is even}. \\
                           \end{array}
                           \right.
  \]
Remark that we do not need an explicit form of the component
 $v^{i,j}_{k,\ell}(x)$   in (\ref{star}), because it is zero for $x\in P^{ev}_n$
 and only    appears   when $p=2$  and $x\in P^{od}_{n>1}.$
Finally, set
\begin{equation}\label{bkl}
\begin{array}{lll}
  b_{k,\ell}(x):= -\mathbf{b}(u_k(x)\otimes u_\ell(x)) , & n=1 ,
  \vspace{1mm}\\
 b^{i,j}_{k,\ell}(x):=   -\mathbf{b}(u^i_k(x)\otimes u^j_\ell(x)), &
 n>1,\,\, i,j\in\{1,2\}.
                       \end{array}
 \end{equation}

The  componentwise analysis of the
chain homotopy $\mathbf{b}$ just detects     the essential relations
among the elements $x_k$ and $b^{i,j}_{k,\ell}(x)$
 given by formula  (\ref{b-kl}) below.

\subsection{A minimal Hirsch resolution $RH$}
\label{minimal}

For a cga $H^\ast$
 with only relations  of the form  $x^{n+1}=0,\, n\geq 1,$
the  Hirsch resolution $R H$ can be described immediately.
Note that the essential idea can be seen for $n=1$ (the case $n>1$ is
somewhat technically difficult only).

Recall the construction of the Hirsch resolution $\rho: RH\rightarrow H.$
As an algebra $RH$ is a free bigraded
$\mathbb{Z}$-algebra
$R^\ast H^\ast=T(V^{*,*}),$
$V= \underset{i,j\geq 0}\bigoplus V^{-i,j},$
i.e.,
the multiplication respects the bidegree, the resolution differential
$d: RH\rightarrow RH$ is of the form
 $d:R^{-i}H^j\rightarrow R^{-i+1}H^{j}$  with  $d|_{R^0H^\ast}=0,$ and
 $H^{-i,\ast}(RH)=0$ for $i\geq 1.$ Let
$V^{*,*}=\langle \mathcal{V}^{*,*} \rangle ,$ so that the bigraded set
 \[\mathcal{V}^{*,*}= \underset{i,j\geq 0}\bigcup\mathcal{V}^{-i,j}\]
is multiplicative generators of $RH$
with $d(\mathcal{V}^{0,\ast})=0$ and  $\rho(\mathcal{V}^{i<0,\ast})=0.$

\subsection{The Hirsch algebra structure on $RH$}
The Hirsch algebra structure on $RH$ is in fact determined by the
 $\smile _{1}$\,-- product.
It
is defined by the equality
\begin{equation}\label{cup-one}
d(a\smile_1b )=
da\smile_1 b-(-1)^{|a|}a\smile _1db+(-1)^{|a|}ab-(-1)^{|a|(|b|+1)}ba;
\end{equation}
it is associative on $V,$ and  extended on the decomposables
by the following two equalities:
\begin{enumerate}
\item
\textit{The (left) Hirsch formula.} For $a,b,c\in RH:$
\begin{equation}\label{hirsch1}
c\smile _{1}ab=(c\smile _{1}a)b+(-1)^{(|c|+1)|a|}a(c\smile _{1}b)
\end{equation}
\item
 \textit{The (right) generalized Hirsch formula.} For $a,b\in
RH$ and $c\in V$ with $d_{h}(c)=\sum c_{1}\cdots
c_{q}$   for  $c_{i}\in V:$
\begin{equation} \label{hirsch2}
ab \smile _{1} \!c
 = \left\{ \!
\begin{array}{llll}
a(b\smile _{1}\!c)+(-1)^{|b|(|c|+1)}(a\smile _{1}\!c)\, b, &
q=1,\vspace{5mm}
 \\
a(b\smile _{1}\!c)+(-1)^{|b|(|c|+1)}(a\smile _{1}\!c)\,b\,+
\vspace{1mm}
\\
\underset{1\leq i<j\leq q}{\sum }\! (-1)^{\varepsilon}\,c_{1}\cdots
c_{i-1}(a\smile _{1}\!c_{i})\,c_{i+1}\vspace{1mm}  \\
\hspace{0.6in}\cdots c_{j-1}(b\smile _{1}c_{j})\, c_{j+1}\cdots
c_{q}, & q\geq 2,
\end{array}
\right.
\end{equation}
where  $\varepsilon=(|a|+1)\left(|c_1|+\cdots +|c_{i-1}| \right)+
(|b|+1)\left(|c_1|+\cdots +|c_{j-1}|\right);$
\end{enumerate}
in particular, for $dc=c_1c_2,$
\begin{multline}\label{quadratic}
ab\smile_1 c=\\
a(b\smile _{1}\!c)+(-1)^{|b|(|c|+1)}(a\smile _{1}\!c) b \,+
(-1)^{(|b|+1)|c_1|}(a\smile_1c_1)(b\smile_1c_2)     .
\end{multline}

We also have another binary product  on $RH,$ denoted by
 $\sqcup_2,$  thought of as a quasi Steenrod's cochain $\smile_2$\,--
 product.
It
is defined  by  $a\sqcup_2 a=0$ for an odd dimensional $a$
and 
\begin{equation}\label{uplus}
d(a\sqcup_2 b )=
\left\{
\begin{array}{lllll}
da\sqcup_2 a+a\smile_1 a,  &  \,\, a=b, \,\,\,\,  |a|\,\,   \text{is
even};\vspace{1mm}\\
2 b\sqcup_2 b-a\smile_1 b+b\smile_1 a,  &  da=b,\,\,\, |b| \,\, \text{is
even};\vspace{1mm}\\
da\sqcup_2  b+(-1)^{|a|}a\sqcup_2 db+  \\
 (-1)^{|a|}a\smile_1 b+(-1)^{(|a|+1|)|b|}b\smile_1 a, &
 \text{otherwise}.
\end{array}
\right.
\end{equation}
 Denoting
 \[
 \epsilon\cdot (a\sqcup_2 b)=\begin{cases}
 2 (a\sqcup_2 b),& a=b,\\
  a\sqcup_2 b, & \text{otherwise}
 \end{cases}
 \]
 extend $\sqcup_2$ on the decomposables
by the following two equalities:
\[
\begin{array}{ll}
c\sqcup_2 ab=
(\epsilon\cdot c\sqcup_2 a)b+(-1)^{|c||a|}a(\epsilon\cdot c\sqcup_2 b)
\ \ \text{and} \vspace{1mm}\\
ab \sqcup_2 \!c
 = \left\{ \!
\begin{array}{llll}
a(\epsilon\cdot b\sqcup_2\!c)+(-1)^{|b||c|}(\epsilon\cdot
a\sqcup_2\!c)\, b, &  dc\in V, \vspace{2mm}
 \\
a(\epsilon\cdot b\sqcup_2\!c)+(-1)^{|b||c|}(\epsilon\cdot
a\sqcup_2\!c)\,b\,+
\vspace{1mm}
\\
 (-1)^{(|b|+1)|c|} (\epsilon\cdot a\sqcup_2 c_1)
 ( c_2\smile_1b )\,-\vspace{1mm}\\
 (-1)^{|b|(|c_1|+1)}
 (a\smile_1 c_1)(\epsilon\cdot b\sqcup_2 c_2), & dc=c_1c_2;
\end{array}
\right.
\end{array}
\]
it is  commutative on $V,$
$a\sqcup_2 b=(-1)^{|a||b|}b\sqcup_2 a,$
and is associative unless on a string  of even dimensional elements $a$
of length $2^k$
\[ U_{2^k}:=a\sqcup_2 \cdots \sqcup_2 a; \]
 namely, if $U_{2^k}$ is defined for $k\geq 1,$ then
 $U_{2^{k+1}}=U_{2^k}\sqcup_2 U_{2^k}.$
The relations between $\smile_1$   and  $\sqcup_2$ are:
\begin{equation*}
c\sqcup_2(a\smile_1b)=(\epsilon \cdot c\sqcup_2
a)\smile_1b+(-1)^{|c||a|}a\smile_1(\epsilon \cdot c\sqcup_2 b),
\end{equation*}
\begin{equation*}
(a\smile_1b)\sqcup_2 c =a\smile _{1} (\epsilon \cdot b\sqcup_2
c)+(-1)^{|b||c|}(\epsilon \cdot a\sqcup_2 c)\smile _1 b,
\end{equation*}
\begin{multline*}
 (ab)\sqcup_2(xy)= a(\epsilon\cdot b\sqcup_2 x)y
 +(-1)^{|b||x|}(\epsilon\cdot a\sqcup_2 x)by\, +
 \\
 (-1)^{|x||ab|}xa(\epsilon\cdot b\sqcup_2
 y)+(-1)^{|x||ab|+|b||y|}x(\epsilon\cdot a\sqcup_2 y)b\, -
\\
(-1)^{|a|+|ab||x|+|y|(|b|+1)}(x\smile_1a)(y\smile_1 b).
\end{multline*}

\subsection{The description of multiplicative generators of $RH$}

Below we describe the certain subsets of $\mathcal{V}^{\ast,\ast}$
needed in the sequel. Fix a prime $p,$ and let
 $\mathcal{H}^*_p\subset H^*(X;\mathbb Z_p)$  be
a set of multiplicative generators. Define the subset in $\mathcal{V}^{-1,*},$
\[
\aleph^{-1,*}_{p^r}=\left\{c\in \mathcal{V}^{-1,*}\mid dc= p^r  c_0,\,\,  c_0\in \mathcal{V}^{0,*}\right\}\ \ \text{and}\ \
\mathcal{O}^{-1,\ast}_{p}=\bigcup_{r\geq
1}\mathcal{\aleph}^{-1,\ast}_{p^r}.
\]
Let  $ x=[t_p(x_0)]\in \mathcal{H}_p$  for $x_0\in
\mathcal{V}^{0,\ast}\cup \mathcal{O}^{-1,\ast}_p.$ Then
$\mathcal{H}^\ast_p=\mathcal{H}^\ast_{0,p} \cup
  \mathcal{H}^\ast_{1,p}$
 with
 $\mathcal{H}^\ast_{0,p}= \{
 x\in  \mathcal{H}^\ast_p\mid  x_0\in  \mathcal{V}^{0,\ast} \}$ and
 $\mathcal{H}^\ast_{1,p}:= \{
 x\in \mathcal{H}^\ast_p\mid  x_0\in  \mathcal{O}^{-1,\ast}_p \};$
 in particular,
  $\beta(x)=0$  for $x\in \mathcal{H}^\ast_{0,p},$ and  $\beta_r(x)\neq
  0$  for $x\in \mathcal{H}^\ast_{1,p}.$  Furthermore, when $x\in P^*_n\cap
  \mathcal{H}_p  $
  with $\beta(x)\neq 0,$
  there are
   $x_1\in \mathcal{V}^{<0,\ast}$  and  $\tilde{x}_1\in \mathcal{V}$ such that for  $d x_0=p \tilde x_0$
   \begin{equation}\label{x1}
     dx_1=
   (-1)^{|x_0|+1}\lambda x_0^{n+1}+ p\tilde{x}_1\ \  \text{and}\ \
   d\tilde x_1= \underset{i+j=n}{\sum} \lambda\, x^i_0\tilde x_0
      x_0^{j}
      \end{equation}
   with $\lambda$ to be    not divisible by $p.$
      Define the subset $\mathcal{X}^{-1,\ast}_p\subset \mathcal{V}^{-1,
 \ast}$ as
  \[\mathcal{X}^{-1,*}_p  =\left\{ x_1\in \mathcal{V}^{-1,*}\mid dx_1=
  (-1)^{|x_0|+1}\lambda x_0^{n+1}, \, \, x_0\in \mathcal{V}^{0,\ast}\right \}
  .\]
Let
$\mathcal{E}^{-1,\ast}=\{a\smile_1 b\in \mathcal{V}^{-1,* }\mid  a,b \in
\mathcal{V}^{0,\ast} \}.$
Then fix the subset $\mathcal{V}^{-1,\ast}_p\subset
\mathcal{V}^{-1,\ast}$  by
 \[
\mathcal{V}^{-1,\ast}_p=\mathcal{O}^{-1,\ast}_p\cup\mathcal{X}^{-1,\ast}_p
\cup\mathcal{E}^{-1,\ast}.\]
To describe  certain elements in $\mathcal{V}^{-i,\ast}_p$ for $i\geq 2,$ first remark about
the set  $\mathcal{V}^{-2,\ast}.$ For $x_0\smile_1 x_0\in
\mathcal{E}^{-1,\ast}$ with an even dimensional $x_0\in
\mathcal{V}^{0,ev}$ we have that $d(x_0\smile_1 x_0)=0,$ and to
achieve the acyclicity in $R^{-1}H^*,$ the generator $x_0\smile_1 x_0$
must be killed
by some generator   from  $\mathcal{V}^{-2,\ast};$ namely,
(\ref{uplus}) implies that such a generator is just
$x_0\sqcup_2 x_0\in \mathcal{V}^{-2,*}$ with
\[ d (x_0\sqcup_2 x_0)= x_0\smile_1 x_0 . \]
Furthermore,  relation (\ref{x1}) gives rises to the infinite
sequence in $RH$
\[\{x_k\}_{k\geq 0}\]
that subject to the
following  relations being in correspondence with  the fact that
 the map $\bar f$    given by (\ref{fchain}) is compatible with the
 bar-construction  differentials
\begin{equation}\label{syzygies1}
%\begin{array}{lll}
dx_{2k+1}=
  - \underset{\substack{i+j=k\\i,j\geq 0}}{\sum}x_{2i}x'_{2j}+
 \underset{\substack{s+t=k-1\\s,t\geq 0}}{\sum} x_{2s+1}x_{2t+1}+ p\,
 \tilde{x}_{2k+1},
 \end{equation} 
 \begin{equation}\label{syzygies2}
\hspace{-0.65in}dx_{2k}= \underset{\substack{i+j=2k-1\\i,j\geq 0}}{\sum}
(-1)^{|x_{i}|+1}x_{i}x_{j}+ p\,\tilde{x}_{2k},
%\end{array}
\end{equation}
where $x'_{2s}$ is given by (\ref{xprime}).
 In particular,   for $n,\lambda =1$
  \begin{equation}\label{oddsyzygies}
   dx_{k}=
  \underset{\substack{i+j=k-1\\i,j\geq 0}}{\sum}  (-1)^{|x_{i}|+1}
  x_{i}x_{j}
  +p\,\tilde{x}_{k}\ \  \ \   \text{with}\ \
     \end{equation}
     \[\hspace{-0.5in} d\tilde x_{k}=
  \underset{\substack{i+j=k-1\\i,j\geq 0}}{\sum}  (-1)^{|x_{i}|} \tilde
  x_{i}x_{j}+x_{i}\tilde x_{j}.\]
 Define the set $\mathcal{X}_p$ by $\mathcal{X}_p=\mathcal{X}_{p,1}\cup
 \mathcal{X}_{p,2}$ for
 \[
 \begin{array}{ll}
 \mathcal{X}_{p,1}=
 \left\{ \{ x_0,x_{p-1},...,x_{p^{r}-1},...\,\}_{r\geq 0},
 \hspace{0.55in}
 x\in \mathcal{H}^\ast_p\cap P^{*}_1  \right\},
   \vspace{1mm} \\
 \mathcal{X}_{p,2}=  \left\{  x_0\cup \{
 x_{1},x_{2p-1}...,x_{2p^{r-1}-1},...\,\}_{r\geq 1},\ \
  x\in  \mathcal{H}^\ast_p\cap  P^\ast_{n>1} \right\}
  .
\end{array}
\]
Furthermore, the elements $b_{k,\ell}^{\ast,\ast}(x)$ given by
(\ref{bkl})
imply the following sequence of relations
\begin{multline}\label{b-kl}
d{b}^{1,1}_{k,\ell}(x)= \alpha^{1,1}_{k,\ell}(x)\,x_{k+\ell-1} +
x_{k-1}\!\smile_1\! x_{\ell-1}-\\
\hspace{1in}\sum_{\substack{0\leq r<k\\ 0\leq m<\ell }}\!
\left(
(-1)^{\epsilon_1} b^{1,1}_{k-r,\ell-m}(x)\cdot X^i_{r-1}\star X^j_{m-1}
+\right.
\\
\left.
(-1)^{\epsilon_2}\!
\left( x_{r-1}\!\smile_1\! x_{m-1}\right)
b^{\ast,\ast}_{k-r,\ell-m}(x)\right)
+
p\, \tilde{b}^{1,1}_{k,\ell }(x),
\end{multline}
with the convention
 ${x}_{-1}\smile_1 {x}_{t}={x}_{t}\smile_1{x}_{-1} =-x_t,$
 and
 \[
\begin{array}{lll}
\epsilon_1=(\,|{X}_{r-1}|+1)(\,|{X}_{m-1}|
+|{X}_{\ell-1}|) +|{X}_{k-1}|+|{X}_{\ell-1}|+
|{X}_{r-1}|+|{X}_{m-1}|,\vspace{1mm}\\
\epsilon_2=(\,|{x}_{m-1}|+1)(\,|{x}_{k-1}|
+|{x} _{r-1}|).
\end{array}
\]
Note that the upper indices of $b^{*,*}_{k-r,\ell-m}$ are uniquely
determined
by the parity of integers $k,r$ and $\ell,m.$
Furthermore,  $b_{11}^{11}(x)=0$ for $x\in P^{od}_{n>1}$ while
for even dimensional $x\in P^{ev}_n$ we can set
$b_{11}^{11}(x)=x_0\sqcup_2 x_0.$
In particular,
 for $x\in P^{od}_1$ formula (\ref{b-kl}) reads:
\begin{multline}\label{bodd}
d{b}_{k,\ell }(x)=\binom{k+\ell}{k}{x}_{k+\ell -1}+
{x}_{k-1}\smile_1 {x}_{\ell-1}- \\
\hspace{0.9in}\sum_{\substack{ 0< r<k \\0< m<\ell}}\!\!
\left(\!\!
\binom{r+m}{r}
{b}_{k-r,\ell-m}(x)\, x_{r+m-1}+
(x_{r-1}\!\smile_1x_{m-1})\,{b}_{k-r,\ell-m}(x)
\right. \\
\hspace{1.0in}
\left.
 +\,{b}_{k-r,\ell}(x)\, x_{r-1} - x_{r-1} b_{k-r,\ell}(x) \right.\\
 \left.
 \hspace{2in} +\, b _{k,\ell-m}(x)\,x_{m-1} - x_{m-1}b _{k,\ell-m}(x)
\right)
+
p\, \tilde{b}_{k,\ell }(x).
\end{multline}
The first equalities of (\ref{b-kl}) for $x\in P^{od}_{n}$
with $\tilde{b}^{*,*}_{k,\ell}(x)=0$ (in particular, $\beta(x)=0$)
read:
\[
\begin{array}{lll}
\\
db^{12}_{11}(x)=2x_1+x_0\smile_1x_0^n,\ \ \ n\geq 1,\vspace{1mm}\\

db^{11}_{12}(x)=x_2+2v_3(x)+x_0\smile_1x_1 +x_0\,b^{12}_{11}(x),\ \ \
n>1, \vspace{1mm}\\

db_{22}(x)=6x_3 +x_1\smile_1x_1 -  2b_{11}(x)\,
x_1-
\left(x_0\smile_1 x_0 \right) b_{11}(x) -\\
\hspace{1.7in}\left(b_{12}(x)+b_{21}(x)\right)x_0+ x_0
\left(b_{12}(x)+b_{21}(x)\right), \,\, n=1, \vspace{1mm}
\\

db^{11}_{22}(x)=2x_3+  2v_4(x) +x_1\smile_1x_1 -
\left(x_0\smile_1 x_0 \right) b^{22}_{11}(x)-\\
\hspace{1.7in}\left(b^{11}_{12}(x)+b^{11}_{21}(x)\right)x^n_0+ x_0
\left(b^{21}_{12}(x)+b^{12}_{21}(x)\right), \,\,  n>1.
\end{array}
\]
For $x\in P^{ev}_{n},\, n\geq1,$ the above equalities  change as:
\[
\begin{array}{lll}
\\
db^{12}_{11}(x)=x_0\smile_1x_0^n  \ \ \   (\text{in fact}\ \
b^{12}_{11}(x)=\underset{i+j=n-1}{\sum} x_0^i\,
b_{11}^{11}(x)\,x_0^j),\vspace{1mm}\\

db^{11}_{12}(x)=x_2+x_0\smile_1x_1 -b^{11}_{11}(x)\,x_0^n -
x_0\,b^{12}_{11}(x),\vspace{1mm}\\

db^{11}_{22}(x)=2x_3 +x_1\smile_1x_1 +(x_0\smile_1 x_0 )\,
b^{22}_{11}(x) +  \\
\hspace{2in}\left(b^{11}_{12}(x)+b^{11}_{21}(x)\right)x^n_0+ x_0
\left(b^{21}_{12}(x)+b^{12}_{21}(x)\right).
\end{array}
\]

\vspace{5mm}
Define the set
$\mathcal{B}_p=\mathcal{B}_{p,1}\cup\mathcal{B}_{p,2}$
for
\[
\begin{array}{lll}
\mathcal{B}_{p,1}=  \left\{ b_{k,\ell}(x)     \mid
  (k,\ell)\in((1, p^r-1), ( p^{r}, (p-1)p^r)), \ \ r\geq1, \,\, x\in
  \mathcal{H}^*_p\cap  P^{*}_{1}  \,\right\},   \vspace{1mm} \\
  \mathcal{B}_{p,2}=  \left\{ b_{k,\ell}(x)     \mid
  (k,\ell)\in(( 2, 2(p^r-1)),(2 p^{r-1},2(p-1)p^{r-1}) ),\ \   r\geq 1,
  \right.\\
  \left.
  \hspace{3.6in} x\in  \mathcal{H}^*_p\cap P^*_{n>1}
   \right\}.
\end{array}
 \]
 \begin{remark}\label{monom}
 Let  \begin{equation}\label{decompos}
        \mathcal{D}^*_p(RH)=p\cdot \!RH + RH^+\!\cdot RH^+  \subset RH.   
      \end{equation}
 Note that  checking the divisibility of $\alpha_{k,\ell}(x)$ by $p$ in (\ref{b-kl}), if $m=k+\ell-1$ and
   $x_m\notin  \mathcal{X}_p,$ then $x_m$ can be  defined  as 
   $x_m=x_{k-1}\smile_1 x_{\ell-1} \mod \mathcal{D}^*_p(RH)$
    in which case we can set $b_{k,\ell}(x)=0\mod p.$ More precisely, by successive application of the argument we can achieve
   \begin{equation}\label{cup1monom}
     x_m=x_{i_1}\smile_1\cdots \smile_1 x_{i_s} \mod \mathcal{D}^*_p(RH) 
     \end{equation}
   for $0\leq i_1<\cdots i_s<m$    and $x_{i_j}\in \mathcal{X}_p $ for all $j.$
   \end{remark}
 
Finally,  define the subset $\mathcal{U}\subset \mathcal{V}$ as
$\mathcal{U}=\left \{ a_1\sqcup_2\cdots \sqcup_2 a_k\mid a_i\in
\mathcal{V} ,\, k\geq 1 \right\}. $
Thus,  we obtain
\[
 \mathcal{V}_p: = \mathcal{X}_p\cup \mathcal{B}_p\cup
 \mathcal{U}\subset \mathcal{V}.
 \]

\subsection{The perturbation $h:RH\rightarrow RH$}
A perturbation $h$
of the differential $d$  is an additive  homomorphism $h:RH\rightarrow
RH$ and a derivation as well
\begin{equation}\label{aderivation}
h(a\cdot b)=ha\cdot b+(-1)^{|a|}a\cdot hb
\end{equation}
such that
\[d^2_h:=(d+h)^2   =  dh+hd + hh=0, \]
and is determined up to the action of the automorphism group on $RH$
 (cf. Theorem 1  in \cite{saneFiltered}). The perturbation
   $h$ is canonically graded as
  \[h=h^2+h^3+\cdots +h^r+\cdots \ \  \text{with}\ \
  h^r:R^{-s}H^{t}\rightarrow  R^{-s+r}H^{t-r+1},\]
  and $r$ is called \emph{the perturbation degree of $h$}.
  The equality $d_h^2=0$ yields a sequence of equalities with respect
   the perturbation degree $r$ the first of which is $dh^2+h^2d=0.$
  In particular,
    \[h|_{R^{-1}H\oplus R^0H}=0.\]
    Similarly to the set $\aleph^{-1,*}_p,$
   define the subset $\aleph^{-1,*}\subset R^{-1}H^*$  as
 \[\aleph^{-1,*}:= \{ c\in R^{-1}H^* \mid dc=\alpha\cdot
 c_0,\,\,\alpha\in \mathbb Z,\,  c_0\in R^0 H^*   \}. \]
  The \emph{transgressive} term $h^{tr}(a)$ of $ha$ is defined by the
  restriction of $ha$ to $\aleph^{-1,*} + R^0H^*$
\[ h^{tr}(a)=   ha|_{\aleph^{-1,*}+ R^0H^*} \ \ \text{for} \ \
a\in R^{\leq -2}H^*. \]
For  $a\in RH,$ let $h^{\prime}(a)$ denote $h(a)$ without the transgressive
terms:
 \[h^{\prime}(a)=h(a)- h^{tr}(a).\]
 Obviously, when $p$ divides $\alpha,$   we have the equality of
 cohomology classes
\begin{equation}\label{split}
  [  d_{h^{\prime}}(a)  ] = - [h^{tr}(a)   ]   \mod p.
\end{equation}
Furthermore,
the perturbation $h$  is  a $\smile_1$\,--\,derivation, too,
\begin{equation}\label{1derivation}
 h(a\smile_1b)=ha\smile_1 b-(-1)^{|a|}a\smile_1 hb,\ \ \ \ \  a,b\in
 RH.
 \end{equation}
 In particular,
$h^{tr}(a\smile_1 b)=0$ for all $a,b\in RH.$

\begin{remark}
  Unlike cup-- and cup--one products the perturbation $h$ is not
  derivation with respect to the
  $\sqcup_2$\,-- product, and unlike the    $\smile_2$-- product the
  $\sqcup_2$-- product does not exist in $C^\ast(X;\mathbb Z)$
  canonically.
\end{remark}

\subsection{The perturbation $h$ on
$\mathcal{V}_p$}\label{perturbation}
The perturbation $h$ on $\mathcal{V}_p$  is purely determined by its
transgressive terms $h^{tr}$ via explicit formulas.
Given $k\geq 0,$ denote $ h^{tr}(x_k)$ by
\[ y_{k}=y^{-1}_{k}+y^{0}_{k}  \ \ \text{with}\ \
y^{-1}_k\in {\aleph}^{-1,*}\ \  \text{and} \ \ y^0_k\in R^0H^*,
\]
  where $y_0=y_1=0$  for any $x\in P^*_n.$
Given $m\geq 1,$ let $P_\ast(m)$ be  a set of  sequences
\[P_q(m)=\{  \mathbf{i}=(i_1,...,i_q)\mid m= |\mathbf{i}|:= i_1+\cdots +i_q,\, i_j\geq 1  \},
\]
and let $\overline{P}_q(m): =  P_q(m-q+1)$ for $q\geq1 .$
Define
 \[
   Y_m=\sum_{\mathbf{i}\in \overline{P}_\ast(m)} Y^\mathbf{i}_m,\,\, \,\,\,\,
   Y^\mathbf{i}_m=\begin{cases}
                     y_{i_1}\sqcup_2\cdots \sqcup_2 y_{i_q}, & q\geq 2,
                     \\
                    y_{m}, & q=1
                  \end{cases}
 \]
 with
 \begin{equation}\label{Y}
  d Y^\mathbf{i}_m=\sum_{\mathbf{i}={\mathbf{i}_1}\cup {\mathbf{i}_2}} Y^{\mathbf{i}_1}_{m_1} \smile_1 Y^{\mathbf{i}_2}_{m_2}\ \ \text{for}\ \ 
 q\geq2.
 \end{equation}
 In particular, $Y^\mathbf{i}_m=Y^\mathbf{i'}_m$
 when $\mathbf{i}$  and $\mathbf{i}'$ differ from each other by a permutation  of components.
Let $n+1=2r,\, r\geq 1.$ Given an odd dimensional  $x\in P^{od}_n$ and
$m\geq 0,$  form  the sum of monomials obtained from
\begin{equation}\label{sum}
\sum _{\substack{i_1+\cdots +i_{2r}=m\\ i_j\geq 0}} \lambda\,
x_{2i_1}\cdots x_{2i_{2r}}
  \end{equation}
 by all possible  replacements
 \[        x_{2i_{2s-1}}x_{2i_{2s}}  \leftarrow  \, - \,
 x_{2i_{2s-1}}\!\smile_1 Y_{2i_{2s}} \]
 and
 \[
  \hspace{1.0in} x_{2i_{2t-1}}x_{2i_{2t}}  \leftarrow  \,\,
  Y_{2i_{2t-1}}  \sqcup_2 Y_{2i_{2t}},\ \ \ \ \ \ \  1\leq s,t\leq r
   \]
   in which the sum of monomials of  the form
\[x_0^{n_1}  Y_{i_1}\cdots   x_0^{n_s}\cdots Y_{i_k}  x^{n_t}_0,\ \ \
n_1+\cdots +n_t=n-2k+1
 ,\,n_s\geq 0,\, 1\leq k\leq r,
\]
 is denoted by
$_n\!Y_{2m},$ and the sum of the other ones  by $Z_{2m}.$
When $n=1,$ obtain
\[Z_{2m}=\sum_{\substack{i+j=m\\ i\geqslant 0;j\geqslant 1}} -
x_{2i}\smile_1 Y_{2j} \ \  \text{with}\  \         _1Y_{2m}=Y_{2m}.\]
Define also $_n\!Y_{2m+1}=Y_{2m+1},\, n\geq1,$  and then  define $h$ on $\mathcal{V}_p$ as follows.
\medskip

\noindent On $\mathcal{X}_p:$ (i) Let $x$ be odd dimensional, $x\in
P^{od}_n.$
In particular, $x_k$ is also odd dimensional for all $k.$ Define
\begin{equation}\label{h-odd1}
h(x_{2m+1})=
Z_{2m}+\, _n\!Y_{2m}
-\sum_{\substack{i+j=m-1\\ i\geqslant 0;j\geqslant 1}}
x_{2i+1}\smile_1 Y_{2j+1}+ Y_{2m+1} +ph(\tilde{x}_{2m+1}),
\end{equation}
\begin{equation}\label{h-odd2}
h(x_{2m})=
-\sum_{\substack{i+j=2m-1\\ i\geqslant 0;j\geqslant 1}}
x_{i}\smile_1 Y_{j}+  Y_{2m}  + ph(\tilde{x}_{k}).
\end{equation}
\medskip
(ii) Let $x$ be even dimensional, $x\in P^{ev}_n.$ This time every $x_k$ is
no longer odd dimensional, but $k$ and $x_k$  have the same parity.
To control signs in $h(x_k)$
    for $a,b\in RH$   define the element $a\Cup_1b\in RH$ by
    \[
    \begin{array}{rllll}
d(a\Cup_1b )&=&
da\Cup_1 b-(-1)^{|a|}a\Cup _1db-ab- ba\ \  \text{and}\ \    b\Cup_1
b=0,  & |b| & \text{is odd},\vspace{1mm}\\

a\Cup_1b& =  & a\smile_1 b, & |b| & \text{is even},
\end{array}
 \]
 and for $da=a_1a_2$ with $a_1,a_2\in V$
 \[ (a_1a_2) \Cup_1 b= (-1)^{|a_1|+1}a_1(a_2\Cup_1 b)+(a_1 \Cup_1
 b)a_2, \ \ \ |b|\ \ \text{is odd} .   \]
  Let $Z^{ev}_{2m}$ be defined as $Z_{2m}$ but $\smile_1$ to be replaced
 by $\Cup_1.$
 When $n=1,$ obtain
\[Z^{ev}_{2m}=\sum_{\substack{i+j=m\\ i\geqslant 0;j\geqslant 1}} -
x_{2i}\Cup_1 Y_{2j}.\]
Set $h(a\Cup_1b )=0$ for $b$ to be odd dimensional, and 
then define
\begin{equation}\label{h-even1}
h(x_{2m+1})=
Z^{ev}_{2m}+ \,_n\!Y_{2m}
-\sum_{\substack{i+j=m-1\\ i\geqslant 0;j\geqslant 1}}
x_{2i+1}\smile_1 Y_{2j+1}+ \\
Y_{2m+1}
 +ph(\tilde{x}_{2m+1}),
\end{equation}
\begin{equation}\label{h-even2}
h(x_{2m})=
-\sum_{\substack{i+j=2m-1\\ i\geqslant 0;j\geqslant 1}}
x_{i}\smile_1 Y_{j}+  Y_{2m}  + ph(\tilde{x}_{k}).
\end{equation}
When $\tilde{x}_k\neq 0$ we can choose $h$
(expressed by $\smile_1$-- and $\sqcup_2$ -- products) with
$h^{tr}(\tilde{x}_k)=0$ for all $k.$ Hence,  $y^{-1}_k=0$  and
$y_k=y^0_k.$
 Furthermore,  the equality    $d^2_h(x_k)=0$ uses the fact that
  $h(Y_k)=0$ for  all $k.$ Indeed, consider  $x_k$ such  that
  $hx_k$ contains   
   $Y^{\mathbf{i}}_{k}$ with $|\mathbf{i}|=2.$  We have $d(Y^{\mathbf{i}}_{k})=(dh+hh)(x_k).$ On the other hand,
      the cocycle $ (\varphi \circ \rho^{-1})( (dh+hh)(x_k)  )$
is cohomologous to zero in $C^*(X;\mathbb Z),$  and,   consequently,
     $h^{tr}( Y^{\mathbf{i}}_{k})  =0$  in $RH.$ 
     In view of   $(\ref{Y})$  we can inductively on the length of 
     $\mathbf{i}$
       achieve 
          $h^{tr}( Y^{\mathbf{i}}_{k})  =0$ for $|\mathbf{i}|\geq 2.$
      \begin{remark}
      In general, $h(a\sqcup_2 a)$ may be not zero for an even
      dimensional $a$ (see the last paragraph of this section).
      However, the established
       equality   $h(y \sqcup_2 y)=0$  could be considered as a
      proof of the fact that $Sq_1$ annihilates  the symmetric Massey
      products in $H^*(X;\mathbb Z_2)$   (cf. \cite{saneFiltered}).
    \end{remark}
      In particular,  the first four  equalities of
      (\ref{h-odd1}) with
$\tilde{x}_k=0$ are:
\[\begin{array}{c}
       hx_2=y_2 ,\\
    hx_3=-x_0 \smile_1 y_2+  y_3, \\
     hx_4=-x_0 \smile_1 y_3- x_1 \smile_1 y_2 + y_4,  \\
     hx_5=-x_0 \smile_1 y_4- x_1 \smile_1 y_3 -
     x_2 \smile_1 y_2 + y_2\sqcup_2 y_2 + y_5.
  \end{array}
  \]

\medskip

   \noindent On $\mathcal{B}_p:$
      For $k,\ell\geq 1$   denote
   \begin{equation}\label{trans}
     c^{*,*}_{k,\ell}(x):= -
   h^{tr}(b^{*,*}_{k,\ell}(x))\ \
   \text{with}\ \  c_{k,\ell}=c^{-1}_{k,\ell}(x)+c^0_{k,\ell}(x)\in \aleph^{-1}H^*+R^0H^*.
   \end{equation}
   Given  a pair $(k,\ell)$ and $t\geq 1,$  for $\mathbf{s}=(s_1,...,s_t)\in  P_t(k+\ell-t)$ 
    define  the elements $C^{\mathbf{s}}_{k,\ell}(x)\in RH$ inductively
    with
    $C^\mathbf{s}_{k,\ell}=C^\mathbf{s'}_{k,\ell}$
                    when $\mathbf{s}$  and $\mathbf{s}'$ differ from each other by a permutation of components as follows.
   Let for $t=1$ and $\mathbf{s}=(s_1)=(k+\ell-1)$
    \begin{equation}\label{cc}
       C^{(k+\ell-1)}_{k,\ell}(x): =c_{k,\ell}(x),
    \end{equation}
    and for $t>1$ 
    denoting
   \begin{equation*}\label{ekl}
     \epsilon_{k,\ell}=\begin{cases}
    1, &   k\neq \ell  , \\
               2, &    k=\ell,
             \end{cases}
             \end{equation*}
let for $x\in P^{od}_1:$
     \begin{multline}\label{C-kl-od1}
   d C^\mathbf{s}_{k,\ell}(x)   =
   \binom{k+\ell}{k} Y^\mathbf{s}_{k+\ell-1}-\sum_{\mathbf{i}_k\,\cup\,\mathbf{j}_\ell=\mathbf{s}}
   \epsilon_{k,\ell} \cdot Y^{\mathbf{i}_k}_{k-1}\sqcup_2
   Y^{\mathbf{j}_\ell}_{\ell-1}
   +
   \\
   \hspace{0.5in}\sum_{\mathbf{i}\,\cup\,\mathbf{j}=\mathbf{s}}
      \sum_{\substack{0< r<k \\ 0< m<\ell}}
\left(\binom{r+m}{r}
Y_{r+m-1}^\mathbf{i}\smile_1 C^\mathbf{j}_{k-r,\ell-m}(x)+\right.\\
\left.
\hspace{2.28in}
Y_{r-1}^\mathbf{i}\smile_1
C^\mathbf{j}_{k-r,\ell}(x)+Y_{m-1}^\mathbf{i}\smile_1
C^\mathbf{j}_{k,\ell-m}(x)
-
\right.\\
\hspace{2.28in}
\left.
C_{k-r,\ell}^\mathbf{j}(x)\smile_1 Y^\mathbf{i}_{r-1}-
C^\mathbf{j}_{k,\ell-m}(x)\smile_1 Y^\mathbf{i}_{m-1}-\right.
\\
\left.
C^\mathbf{i}_{r,m}(x)\cdot C^\mathbf{j}_{k-r,\ell-m}(x)\,\,\right);
    \end{multline}
    %\newpage
 \noindent for $x\in P^{od}_{n>1}$ (i.e., $p=2$):
       \begin{multline}\label{C-kl-odn}
   d C^\mathbf{s}_{k,\ell}(x)   =
   \alpha^{1,1}_{k,\ell}(x)\cdot {_n\!Y}^\mathbf{s}_{k+\ell-1}-
   \sum_{\mathbf{i}_k\,\cup\,\mathbf{j}_\ell=\mathbf{s}}
   \epsilon_{k,\ell} \cdot {_n\!Y}^{\mathbf{i}_k}_{k-1}\sqcup_2
   {_n\!Y}^{\mathbf{j}_\ell}_{\ell-1}
   +
   \\
   \hspace{0.5in}\sum_{\mathbf{i}\,\cup\,\mathbf{j}=\mathbf{s}}
      \sum_{\substack{0< r<k \\ 0< m<\ell}}
\left(
\left(\alpha^{*,*}_{r,m}(x)\,
{_n\!Y}_{r+m-1}^\mathbf{i}+2\,hv^{*,*}_{k,\ell}(x)\right)\smile_1
C^\mathbf{j}_{k-r,\ell-m}(x)+\right.\\
\left.
\hspace{2.1in}
{_n\!Y}_{r-1}^\mathbf{i}\smile_1
C^\mathbf{j}_{k-r,\ell}(x)+{_n\!Y}_{m-1}^\mathbf{i}\smile_1
C^\mathbf{j}_{k,\ell-m}(x)
-
\right.\\
\hspace{2.1in}
\left.
C_{k-r,\ell}^\mathbf{j}(x)\smile_1 {_n\!Y}^\mathbf{i}_{r-1}-
C^\mathbf{j}_{k,\ell-m}(x)\smile_1 {_n\!Y}^\mathbf{i}_{m-1}-\right.
\\
\left.
C^\mathbf{i}_{r,m}(x)\cdot C^\mathbf{j}_{k-r,\ell-m}(x)\,\,\right),
    \end{multline}
     and
   for $x\in P_n^{ev}:$
    \begin{multline}\label{C-kl-evn}
   d C^\mathbf{s}_{k,\ell}(x)   =
   \alpha^{1,1}_{k,\ell}(x)\cdot {_n\!Y}^\mathbf{s}_{k+\ell-1}-
   \sum_{\mathbf{i}_k\,\cup\,\mathbf{j}_\ell=\mathbf{s}}
   \epsilon_{k,\ell} \cdot {_n\!Y}^{\mathbf{i}_k}_{k-1}\sqcup_2
   {_n\!Y}^{\mathbf{j}_\ell}_{\ell-1}
   +
   \\
   \hspace{0.5in}\sum_{\mathbf{i}\,\cup\,\mathbf{j}=\mathbf{s}}
      \sum_{\substack{0< r<k \\ 0< m<\ell}}
\left((-1)^{\varsigma}\alpha^{*,*}_{r,m}(x)\cdot
{_n\!Y}_{r+m-1}^\mathbf{i}\Cup_1 C^\mathbf{j}_{k-r,\ell-m}(x)+\right.\\
\left.
\hspace{1.4in}
(-1)^{\varsigma_1} {_n\!Y}_{r-1}^\mathbf{i}\Cup_1
C^\mathbf{j}_{k-r,\ell}(x)+
(-1)^{\varsigma_2} {_n\!Y}_{m-1}^\mathbf{i}\Cup_1
C^\mathbf{j}_{k,\ell-m}(x)
-
\right.\\
\hspace{1.4in}
\left.
(-1)^{\varsigma_1}  C_{k-r,\ell}^\mathbf{j}(x)\Cup_1
{_n\!Y}^\mathbf{i}_{r-1}-
(-1)^{\varsigma_2}
{C}^\mathbf{j}_{k,\ell-m}(x)\Cup_1 {_n\!Y}^\mathbf{i}_{m-1}-\right.
\\
\left.
 C^\mathbf{i}_{r,m}(x)\cdot C^\mathbf{j}_{k-r,\ell-m}(x)\,\,\right),
    \end{multline}
     where
            $\varsigma=(k+\ell+1)(r+m),\,\varsigma_1=(k+\ell+1)r,\,
    \varsigma_2=(k+\ell+1)m;$
       in particular,  for $\mathbf{s}=(s_1)=(k+\ell-1)$ equalities (\ref{C-kl-od1}) -- \ref{C-kl-evn}) reduce to
 \begin{equation}\label{bc}
dc^{1,1}_{k,\ell}(x)= \alpha^{1,1}_{{k,\ell}}(x)\,y_{k+\ell-1} ,
 \end{equation}
 where $\alpha^{1,1}_{k,\ell}(x)$ is defined in subsection \ref{derhomotopy}.
Then  denoting
            \[
    C^*_{k,\ell}(x)=\sum_{\mathbf{s}\in \overline P(k+\ell)}\!\!
    C^\mathbf{s}_{k,\ell}(x),\]
       define
           for $x\in P^{od}_1:$
 \begin{multline}\label{pertu-bod}
   h(b_{k,\ell}(x))=-
x_{\ell-1}\sqcup_2 {Y_{k-1}}+\\
\sum_{\substack{ 0<r<k\\0<m<\ell}}
\left( \binom{r+m}{r}
x_{r+m-1}\smile_1 C^*_{k-r,\ell-m}(x)+\right.\\
\hspace{1.2in}
\left. b_{k-r,\ell}(x)\smile_1 {Y_{r-1}}+
b_{k,\ell-m}(x)\smile_1 {Y_{m-1}}+ \right.\\
\left.b_{r,m}(x)\cdot h({b}_{k-r,\ell-m}(x))\right)-
  C^*_{k,\ell}(x)+ p\, h(\tilde{b}_{k,\ell}(x)),
    \end{multline}
    %\newpage
  \noindent  for $x\in P^{od}_{n>1}$ (i.e., $p=2$):
 \begin{multline}\label{pertu-bod2}
   h(b^{1,1}_{k,\ell}(x))=-
x_{\ell-1}\sqcup_2 {_n\!Y_{k-1}} +\\
\sum_{\substack{ 0<r<k\\0<m<\ell}}
\left(   X_{r-1}\star X_{m-1}
\smile_1 \!C^*_{k-r,\ell-m}(x)\,+
\right.\\
\hspace{1.2in}
\left. b^{*,*}_{k-r,\ell}(x)\smile_1 {_n\!Y_{r-1}}\,+
b^{1,1}_{k,\ell-m}(x)\smile_1 {_n\!Y_{m-1}}+
\right.\\
\left.b^{1,1}_{r,m}(x)\cdot h({b}^{*,*}_{k-r,\ell-m}(x))\right)-
  C^*_{k,\ell}(x)+ p\, h(\tilde{b}_{k,\ell}(x))
    \end{multline}
  and   for  $x\in P^{ev}_n:$
    \begin{multline}\label{pertu-bev}
   h(b^{1,1}_{k,\ell}(x))=-
x_{\ell-1}\sqcup_2  {_n\!Y_{k-1}}+\\
\sum_{\substack{ 0<r<k\\0<m<\ell}}
\left(\, (-1)^{\varsigma}
X_{r-1}\star X_{m-1}
\Cup_1 C^*_{k-r,\ell-m}(x)-
\right.\\
\hspace{1.2in}
\left.(-1)^{\varsigma_1} b^{*,*}_{k-r,\ell}(x)\Cup_1 {_n\!Y_{r-1}}-
(-1)^{\varsigma_2}b^{1,1}_{k,\ell-m}(x)\Cup_1{ _n\!Y_{m-1}}+ \right.\\
\left.b^{1,1}_{r,m}(x)\cdot h({b}^{*,*}_{k-r,\ell-m}(x))\,\right)-
  C^*_{k,\ell}(x)+ p\, h(\tilde{b}_{k,\ell}(x)).
    \end{multline}
       Note  that  $Y_0=Y_1=0$ for any $x\in P^*_n.$ In particular, for $n=1$
the equality $d^2 (C^\mathbf{s}_{k,\ell}(x))=0$ relies on the following
binomial equality for $s,t\geq 1$
    \[  \binom{s+t}{s} =
                \underset{0\leqslant r <s }{\sum} \binom{ s }{r}
                \binom{t}{s-r}+1.
      \]
      When $\tilde{b}_{k,\ell}(x)\neq 0,$
       we can choose $h$ similarly to $h(\tilde{x}_k)$ with
        $h^{tr}(\tilde{b}_{k,\ell}(x))=0.$
         Hence,  $c^{-1}_{k,\ell}(x)=0$  and $c_{k,\ell}(x)=c^0_{k,\ell}(x).$
         Furthermore, similarly to $h(Y_k),$  we have  $h(C^{\mathbf{s}}_{*,*}(x))=0;$   indeed, consider $b_{k,\ell}(x)$ such that $hb_{k,\ell}(x)$ contains
         $C^{\mathbf{s}}_{*,*}(x)$  with
    $ \mathbf{s}=(s_1,s_2,s_3)$  to be  of length $3.$ We have
     $dC_{k,\ell}(x)=(hd+hh)(b_{k,\ell}(x)).$ Then the cocycle
     $ (\varphi \circ \rho^{-1})((hd+hh)(b_{k,\ell}(x)))$
     is cohomologous to zero in $C^*(X;\mathbb Z).$  Consequently,
      $h(C^{\mathbf{s}}_{*,*}(x))=0$   in $RH.$
      In view of   $(\ref{C-kl-od1})-(\ref{C-kl-evn})$ 
      we can inductively   on the length of $\mathbf{s}$ achieve  $h(C^{\mathbf{s}}_{*,*}(x))=0$ 
     for any  $\mathbf{s}.$  

%\newpage

The first equalities  of (\ref{pertu-bod}) for $x\in P^{od}_1$ with
$\tilde{b}_{k,\ell}(x)=0$  read:
\[
\begin{array}{llll}
h(b_{11})=-c_{11} \ \  \text{with}\ \ dc_{11}=0,  \\
h(b_{12})= x_0\smile_1 c_{11}-c_{12}  \ \  \text{with}\ \ dc_{12}=3y_2,
 \vspace{1mm}  \\
h(b_{13})= x_0\smile_1 c_{12}+x_1\smile_1 c_{11}- c_{13}  \ \
\text{with}\ \ dc_{13}=4 y_3, \vspace{1mm} \\
h(b_{14})= x_0\smile_1 c_{13}+x_1\smile_1 c_{12}
-x_2\smile_1 c_{11}+ b_{11}\smile_1 y_3  -c_{14}  \\  
 \hspace{3.7in}  \text{with}\ \ dc_{14}=5 y_4, \vspace{1mm}
\\
h(b_{22})=x_0\smile_1\!(c_{12}+c_{21})   +2x_1\!\smile_1\! c_{11}+
b_{11}c_{11} - C^{11}_{22} -c_{22}\vspace{1mm}\\ 
\hspace{2.5in} \text{with} \ \,
d(C^{11}_{22})=c_{11}c_{11} \ 
  \text{and}\ dc_{22}=6y_3,\vspace{1mm}
\\
h(b_{33})=x_0\smile_1\!(C^*_{23}+C^*_{32})   +
x_1\!\smile_1\! (2C^*_{22}+C^*_{13}+C^*_{31})+
x_2\!\smile_1\! 3(c_{12}+c_{21})+\vspace{1mm}\\
 \hspace{0.6in} b_{11}\cdot hb_{22}+ b_{12}\cdot hb_{21}+ b_{21}\cdot
 hb_{12}+b_{22}\cdot c_{11}- C^{13}_{33}-C^{22}_{33}-c_{33} \ \
 \text{with}       \vspace{1mm}\\
 %\hspace{0.6in}
 d(C^{\mathbf{s}}_{33})=\begin{cases}
 6y_3\smile_1 c_{11}-c_{11}c_{22}-c_{22}c_{11}, & \mathbf{s}=(1,3),\\
 18y_2\sqcup_2
 y_2+3y_2\smile_1(c_{12}+c_{21})-c_{12}c_{21}-c_{21}c_{12}, &
 \mathbf{s}=(2,2),\\
 5y_5, &\mathbf{s}=(5).
  \end{cases}
\end{array}
\]

\noindent  On $\mathcal{U}:$ When $d(a\sqcup_2 b)$ agrees with the
$d(a\smile_2b)$   we have $h^{tr}(a\sqcup_2 b)=0.$
While
$h^{tr}(a\sqcup_2 b)$ may be  non-trivial when $a=b.$
Namely, let $x=[t_2(x_0)]\in  H^*(X;\mathbb Z_2)$  with
 $x_0\in \mathcal{V}^{0,\ast}\cup\mathcal{O}^{-1,\ast}_2.$
Then
$h(x_0\sqcup_2 x_0)=h^{tr}(x_0\sqcup_2 x_0)$ and
 \[  [t_2(h(x_0\sqcup_2 x_0))] = Sq_1(x)\in H^*(X;\mathbb Z_2).\]
 Consequently,
$ h(x_0\sqcup_2 x_0 )\neq 0$  when  $Sq_1(x)\neq 0.$
The  value of $h$ on $x_0\sqcup_2 \cdots \sqcup_2 x_0$   may be also
non-zero, but this is not important in the sequel.

\section{The Secondary cohomology operations
$\psi_{r,n}$}\label{2operations}

To construct the secondary cohomology operations  we use  the minimal
filtered  Hirsch  model (\ref{zig-zag}) of  $C^*(X;\mathbb{Z}).$ Note
that the basic equalities   for the  construction  of  $\psi_{r,n}$ 
 in  $C^*(X;\mathbb{Z})$ that correspond to that in $RH$
hold only up to homotopy,
so that
 the immediate construction of $\psi_{r,n}$     in the integral
 simplicial (cubical)
 cochain complex of $X$
requires to evoke  the all canonical multilinear  Hirsch structural
operations $\{E_{p,q}\}_{p,q\geq 1}$  beside the $\smile_1$ --  product
(compare (3.7)--(3.8) in \cite{saneFiltered}).
 In other words, the  filtered
  Hirsch  model $(RH,d_h)$ allows us to construct the secondary
  cohomology operations by rather simplified formulas.
   For $s\geq 1,$ denote
   \[
   (k_s,\ell_s)=  \begin{cases}
       (p^{s}, (p-1)p^{s})  , &  x\in P^*_1(X),\\
        (2p^{s-1}, 2(p-1)p^{s-1}) , & x\in P^*_{n>1}(X).
       \end{cases}
       \]
  Tacking into account    (\ref{split})   and (\ref{trans})
    define
  the following cohomology elements:
  
  \noindent For $p>2$  and  $r\geq 1:$
\begin{equation}\label{psiprime}
\begin{array}{llll}
  \psi^{\prime}_{r,1}(x)=[t_p(c_{k_r,\, \ell_r}(x))]=
  [\, t_p(d_{h'}b_{k_r,\,\ell_r}(x))\,]     \in H^{(m-1)p^{r+1}+1}(X;\mathbb Z_p),
  \vspace{1mm} \\
   \hspace{3.56in} 
      x\in P^m_1(X),
     \vspace{2mm}\\
 \psi^{\prime}_{r,n}(x)=[t_p(c_{k_r,\, \ell_r}(x))]=
[\,t_p(d_{h'}b_{ k_r,\,\ell_r}(x))\,]
\in  H^{(m(n+1)-2)p^r+1}(X;\mathbb Z_p), \vspace{1mm}\\
\hspace{3.55in}  x\in P^m_{n>1}(X),
\end{array}
\end{equation}
and for $p=2$  and  $r\geq 1:$
\begin{equation}\label{psiprime2}
\begin{array}{llll}
  \psi^{\prime}_{r,n}(x)=[t_2(c_{{2^r},\, 2^r}(x))]=
[\,t_2(d_{h^{\prime}}(b_{ 2^{r},\,2^{r}})(x) )\,]\in\vspace{1mm}\\
\hspace{2.0in}
H^{(m(n+1)-2)2^r+1}(X;\mathbb Z_2),\,\, x\in P^m_{n}(X).
\end{array}
\end{equation}
Given $ x\in P^\ast_{n}(X),$ apply to  (\ref{zig-zag})  and choose
$\varphi\varrho^{-1}( c_{k_r,\,\ell_r}(x))\in  C^*(X;\mathbb{Z}) .$
Then the mod $p$ cohomology classes of two such choices
differ by the value of a primary  operation  on $x,$ and, hence,
(\ref{psiprime})--(\ref{psiprime2}) induces the  secondary cohomology
operations
for $p\geq 2$ and $r\geq 1$
\begin{equation}\label{secondary2}
\begin{array}{rrl}
\psi_{r,1}:P^m_1(X) & \rightarrow & H^{(m-1)p^{r+1}+1}
(X;\mathbb Z_p)/\operatorname{Im}\mathcal{P}_1,
\vspace{1mm}\\
\psi_{r,n}: P^m_{n>1}(X) & \rightarrow  & H^{(m(n+1)-2)p^r+1 }(X;\mathbb Z_p)/
\operatorname{Im}\mathcal{P}_1,
\end{array}
 \end{equation}
 that are linear for  $n+1=p^k,\,k\geq 1.$
 Note that
 $\psi_{1,p^k-1}$ coincides with   the Adams secondary cohomology
 operation $\psi_k$ for
$p$ odd or $p=2$ and $k>1$ (cf. \cite{adams-hopf1}, \cite{kraines2},
\cite{harper}).

\subsection{The generators $\omega_{r,n}(x)$}

To calculate  $H^\ast(t_p(\overline{V}),\bar d_h)$ consider
 the cochain complex $(\overline{V},\bar d_h).$ 
 We have that only the elements
 $\bar x_m \in \overline{\mathcal{X}}_p\subset \overline{V}$ are $\bar
 d$-cocycles  mod $p$ for all $m,$ while
 from (\ref{h-odd2})--(\ref{h-even2}) we deduce that
  $\bar x_m$  is  $\bar d_h$-cocycle   mod $p$ if and only if $y_{k}\in
  \mathcal{D}^\ast_p(RH)$
 for all $k\leq m$ ($\mathcal{D}^\ast_p(RH)$  is given by (\ref{decompos}))
 in which case $\bar x_m$ is
  automatically non-cohomologous to zero unless $m=0,$ and, hence,
defines the non-trivial cohomology class
$  [t_p(\bar x_m)]\in  H^*\left(t_p(\overline{V}) ,\bar d_h\right)$
for $m>0.$
Let $\epsilon_x$ be given by (\ref{ex}),
and let $\ell_x\geq \epsilon_x$ be
  the smallest integer
 such that
 \begin{equation}\label{ellx}
   y_{m}\in \mathcal{D}^\ast_p(RH) \ \
  \text{for} \  \left\{\!\!
  \begin{array}{ll}
  m< p^{\ell_x+1}-1, &
    x \in P^{od}_1(X),\vspace{1mm}\\
  m< 2p^{\ell_x}-1, &
 \text{otherwise}
 \end{array}
 \right.
 \end{equation}
 (when $p=2,$ the both items above  are the same).
  Then for $1\leq r\leq \ell_x$ define
  $\omega_{r,n}(x)\in   H^*(\Omega X;\mathbb Z_p)$
  by
  \begin{equation}\label{omega-r}
\omega_{r,n}(x)=\begin{cases}
         \left[\,{t_{p}( \bar x_{p^r-1})}\,\right] , & x\in
         P^{*}_1(X), \vspace{1mm}
           \\
           \left[\, {t_{p}( \bar x_{2p^{r-1}-1})}\,\right] ,  &
            x\in P^*_{n>1}(X);
         \end{cases}
\end{equation}
in particular, $\omega_{1,1}(x)=[t_p(\bar x_{p-1})]$ for $n=1$ and
$\omega_{1,n}(x)=[t_p(\bar x_{1})]$ for $n>1.$ From (\ref{syzygies1}),\,(\ref{cup1monom}),\,(\ref{h-odd1}) and (\ref{hirsch1}),\,(\ref{quadratic}) follows that
$\omega_{r,n}(x)$ satisfy (\ref{coprod}); in particular, the  summand $a'_r\otimes a''_r$
in (\ref{coprod}) is determined by the quadratic components of the perturbation $h$ in
$(RH,d_h).$

The following proposition is starting point to relate the secondary cohomology
operations with the loop space cohomology.

 Given $r\geq 1,$ denote
 \[  \mathbf{p}_m=
 \begin{cases}
  (p^{m+1}\!-1,...,p^{m+1}\!-1) \in P_{p^{r-m}}\,(\,p^{r+1}\!-p^{r-m}) ,&x\in P^*_1(X),\, m\geq 0,\\
  \hspace{0.16in} (2p^m\!-1,...,2p^m\!-1) \in P_{p^{r-m}}\,(\,2p^r\!-p^{r-m}) ,&
   x\in P^*_{n>1}(X),\, m\geq 1,
   \end{cases}
   \]
   and let $ \mathbf{p}_{m,i}$ denote the subsequence of  $\mathbf{p}_{m}$
   of length $i.$
   
   \begin{proposition}\label{first}
 Given  $x\in P^\ast_n(X)$  and   $1\leq r\leq \ell_x,$
  $d_{h'}(b_{k_r,\,\ell_r}(x))$ may contain only the multiplicative generators 
  of the form $x_{k_r-1}\smile_1 x_{\ell_r-1}$  and\, $C^{\mathbf{p_m}}_{k_r,\ell_r}(x)\!\mod p$  
  with
 \[ [t_p(\,\overline{x_{k_r-1}\!\smile_ 1\! x_{\ell_r-1}}\,)]=
  \sum_{\epsilon_x\leq m\leq r}
  \left[t_p\left(\,\overline{C}\,^{\mathbf{p}_m}_{k_r,\,\ell_r}(x)\right)\right] .
   \]

\end{proposition}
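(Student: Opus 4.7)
The plan is to compute $d_{h'}(b_{k_r,\ell_r}(x))$ term by term using the resolution differential formulas (\ref{b-kl})-(\ref{bodd}) and the perturbation formulas (\ref{pertu-bod})-(\ref{pertu-bev}), and then to sort the resulting terms into three classes: (i) decomposables in $RH$, which project to zero in $V$ and hence contribute nothing to $\overline{V}$; (ii) $p$-multiples (such as $p\tilde{b}$, $ph(\tilde{b})$), which vanish mod $p$; and (iii) linear multiplicative generators with a mod-$p$ unit coefficient, which comprise the surviving ``generator content''. The structural claim amounts to identifying class (iii), and the final identity will follow because $b_{k_r,\ell_r}(x)\in \mathcal{V}$, so its image $\bar{b}_{k_r,\ell_r}(x)\in \overline{V}$ makes $\bar{d}_h\bar{b}_{k_r,\ell_r}(x)$ automatically a coboundary in $(t_p(\overline{V}),\bar{d}_h)$.

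The essential mod-$p$ vanishing comes from Kummer's theorem. The coefficient $\alpha^{1,1}_{k_r,\ell_r}(x)$ of the generator $x_{k_r+\ell_r-1}$ equals $\binom{p^{r+1}}{p^r}$ for $x\in P^*_1$ and $\binom{p^r}{p^{r-1}}$ for $x\in P^*_{n>1}$; each involves exactly one carry in base-$p$ addition, so both are divisible by $p$ exactly once and vanish mod $p$. Similarly the binomials $\binom{r+m}{r}$ and $\binom{k_r+\ell_r}{k_r}$ appearing in (\ref{pertu-bod}) and (\ref{C-kl-od1})-(\ref{C-kl-evn}) vanish mod $p$ except at very special Frobenius-type values of $(r,m)$. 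All of the cross-product sums $b_{k-r,\ell}\smile_1 Y_{r-1}$, $b_{r,m}\cdot h(b_{k-r,\ell-m})$, $x_{r+m-1}\smile_1 C^*_{k-r,\ell-m}$, etc.\ in (\ref{pertu-bod})-(\ref{pertu-bev}) are products, so they are decomposable. What remains as class (iii) terms is $x_{k_r-1}\smile_1 x_{\ell_r-1}$ from $db_{k_r,\ell_r}(x)$ and the ``pure'' generators in $-C^*_{k_r,\ell_r}(x)=-\sum_{\mathbf{s}}C^{\mathbf{s}}_{k_r,\ell_r}(x)$ from $h(b_{k_r,\ell_r}(x))$.

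The main obstacle will be to show that within the sum $-\sum_{\mathbf{s}}C^{\mathbf{s}}_{k_r,\ell_r}(x)$, only the uniform Frobenius partitions $\mathbf{s}=\mathbf{p}_m$ for $\epsilon_x\leq m\leq r$ survive modulo $p$ and decomposables. I would argue by induction on the length of $\mathbf{s}$, using the recursive definition (\ref{C-kl-od1})-(\ref{C-kl-evn}): whenever $\mathbf{s}$ is non-uniform, one can split $\mathbf{s}=\mathbf{i}\cup\mathbf{j}$ and $(k_r,\ell_r)=(r,m)+(k_r-r,\ell_r-m)$ compatibly so that the cross-product term $C^{\mathbf{i}}_{r,m}(x)\cdot C^{\mathbf{j}}_{k_r-r,\ell_r-m}(x)$ forces $C^{\mathbf{s}}_{k_r,\ell_r}(x)$ to be a decomposable in $RH$ modulo coboundaries; the uniform $\mathbf{p}_m$-partitions, by contrast, admit no such splitting with non-vanishing Lucas coefficients, which is precisely the combinatorial reflection of the range $\epsilon_x\leq m\leq r$. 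Once this is established, one has
\[
\bar{d}_h\bar{b}_{k_r,\ell_r}(x)\equiv \overline{x_{k_r-1}\smile_1 x_{\ell_r-1}}\,-\!\!\!\sum_{\epsilon_x\leq m\leq r}\!\!\overline{C^{\mathbf{p}_m}_{k_r,\ell_r}(x)}\ \ (\bmod\,p)
\]
in $\overline{V}$, and the assertion on cohomology classes in $H^*(t_p(\overline{V}),\bar{d}_h)\cong H^*(\Omega X;\mathbb{Z}_p)$ follows immediately.
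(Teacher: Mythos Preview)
Your overall plan---sort the terms of $d_{h'}(b_{k_r,\ell_r}(x))$ into $\cdot$-decomposables, $p$-multiples, and surviving generators, then read off the cohomology identity from the coboundary $\bar d_h\bar b_{k_r,\ell_r}(x)$---is exactly the paper's strategy, and the final deduction of the identity from the coboundary is correct. But two of the intermediate steps contain real gaps.

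First, you never invoke the hypothesis $r\leq \ell_x$. This is not cosmetic: by (\ref{ellx}) it is precisely what forces $y_m\in\mathcal{D}^*_p(RH)$ for the relevant range, and hence $Y^*_*\equiv 0\pmod p$. You instead assert that terms like $x_{\ell-1}\sqcup_2 Y_{k-1}$, $b_{k-r,\ell}\smile_1 Y_{r-1}$ and $x_{r+m-1}\smile_1 C^*_{k-r,\ell-m}$ are ``products, so they are decomposable''. They are not $\cdot$-decomposable in $RH=T(V)$: $\smile_1$- and $\sqcup_2$-expressions of generators lie in $\mathcal{V}$ (see $\mathcal{E}^{-1,*}$ and $\mathcal{U}$), so they do \emph{not} project to zero in $V$. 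What actually kills them mod $p$ is that the factor $Y_*$ lies in $\mathcal{D}^*_p(RH)$, after which the Hirsch formulas (\ref{hirsch1})--(\ref{hirsch2}) and their $\sqcup_2$-analogues force the whole expression into $\mathcal{D}^*_p(RH)$. Without $r\leq\ell_x$ this step fails.

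Second, your argument for eliminating the non-uniform partitions $\mathbf{s}\neq\mathbf{p}_m$ has the logic inverted. You propose that a nonzero cross-term $C^{\mathbf{i}}_{r,m}\cdot C^{\mathbf{j}}_{k-r,\ell-m}$ in $dC^{\mathbf{s}}_{k_r,\ell_r}$ ``forces $C^{\mathbf{s}}$ to be decomposable''. But a nonzero right-hand side in the defining relation (\ref{C-kl-od1})--(\ref{C-kl-evn}) would make $C^{\mathbf{s}}$ a genuine indecomposable generator, not a decomposable one. The paper's mechanism is the opposite: once the $Y$-terms vanish (via $\ell_x$) and one uses Remark~\ref{monom} to set $c_{k,\ell}(x)=0\pmod p$ whenever $\alpha_{k,\ell}(x)$ is a $p$-unit, the entire right-hand side of $dC^{\mathbf{s}}_{k_r,\ell_r}$ is $\equiv 0\pmod p$ for $\mathbf{s}\neq\mathbf{p}_m$, and one then \emph{chooses} $C^{\mathbf{s}}_{k_r,\ell_r}(x)=0\pmod p$. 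It is a freedom-of-choice argument, not a forcing argument; the binomial/Lucas analysis you invoke is what singles out the $\mathbf{p}_m$ as the only partitions for which this freedom is unavailable, because there $dC^{\mathbf{p}_m}_{k_r,\ell_r}$ reduces to the genuinely nonzero expression (\ref{C1}).
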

  \begin{proof}
  Consider  equalities  (\ref{C-kl-od1})--(\ref{C-kl-evn}) for $dC_{k_r,\,\ell_r}^{\mathbf{s}}$  with  $\mathbf{s}=\mathbf{p}_m.$ In view of  (\ref{ellx})  the components $Y^*_* \mod p$ vanish therein.
 Also, when
     $\alpha_{k,\ell}(x)$ is not divisible by $p$  in (\ref{b-kl}) we can set
    $c_{k,\ell}(x)=0\mod p$ (cf. Remark \ref{monom}).
 Consequently, for
  $\mathbf{s}$ being  different from $\mathbf{p}_{m}$  we can take $C_{k_m,\ell_m}^{\mathbf{s}}(x)=0\mod p,$ 
   and then  for $\epsilon_x\leq m\leq r$
 \begin{equation}\label{C1}
   dC^{\mathbf{p}_{m}}_{k_r,\ell_r}(x)=  
    \sum_{\substack{i+j=p^{r-m}\\ i,j\geq 1}}
   C^{\mathbf{p}_{m,i}}_{ik_m,\,i\ell_m}(x)\,C^{\mathbf{p}_{m,j}}_{jk_m,\,
   j\ell_m}(x) \mod p.
        \end{equation}
       Checking the divisibility of  $\alpha_{k,\ell}(x)$ by $p$
  in  (\ref{pertu-bod}) -- (\ref{pertu-bev}) finishes the proof.
 
 \end{proof}

\begin{proposition}\label{smilepowers}
For $p\geq 2,$
\[
\left[ t_p\left(\,\overline{x_{k_r-1}^{\smile_1 p}}\,\right)\right]=
\sum_{\epsilon_x\leq m\leq r}
 \left[t_p\left(\overline{C}\,^{\mathbf{p}_{m}}_{k_r,\ell_r}(x)\right)\right].
 \]

\end{proposition}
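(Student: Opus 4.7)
The plan is to invoke Proposition \ref{first} and reduce Proposition \ref{smilepowers} to the cohomological identity
\[
\bigl[t_p\bigl(\overline{x_{k_r-1}^{\smile_1 p}}\bigr)\bigr]
\;=\;
\bigl[t_p\bigl(\overline{x_{k_r-1}\smile_1 x_{\ell_r-1}}\bigr)\bigr]
\quad\text{in}\quad H^*\bigl(t_p(\overline{V}),\bar d_h\bigr).
\]
The induced product on $\overline{V}$ reads $\bar a\bar b=\overline{a\smile_1 b}$, and by (\ref{cup-one}) and (\ref{1derivation}) both $\bar d$ and $\bar h$ act as $\smile_1$-derivations on $RH$; the non-derivation cup-product corrections $\pm ab\pm ba$ are decomposables and therefore die under projection to $\overline{V}$, so $(\overline{V},\bar d_h)$ is a dga. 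Under the hypothesis $1\le r\le\ell_x$ the element $\bar x_{k_r-1}$ is a $\bar d_h$-cocycle mod $p$, so multiplication by $[\bar x_{k_r-1}]$ in cohomology is well-defined, and the task reduces to showing $[\bar x_{k_r-1}^{\,p-1}]=[\bar x_{\ell_r-1}]$.

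To this end I would work at the chain level in $(RH,d_h)$ and iterate through $j=1,\dots,p-2$ using the auxiliary elements $b_{k_r,jk_r}(x)\in RH$ with differentials governed by (\ref{bodd}), respectively (\ref{b-kl}). All correction summands therein carry a $b_{k',\ell'}(x)$-factor multiplied by a further element of $RH$, hence lie in $\mathcal D^*_p(RH)$ and die after projection to $\overline{V}$; the transgressive perturbation $\bar h\,\bar b_{k_r,jk_r}(x)$ vanishes mod $p$ by (\ref{ellx}) (which kills the $Y$-components) together with Remark \ref{monom} applied whenever $\alpha^{1,1}_{k_r,jk_r}(x)$ is a unit mod $p$. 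Extracting the $V$-parts of the resulting relation then yields the cohomological recursion
\[
[\bar x_{k_r-1}\cdot\bar x_{jk_r-1}]\;=\;-\alpha^{1,1}_{k_r,jk_r}(x)\,[\bar x_{(j+1)k_r-1}].
\]

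Telescoping with $a_1=1$ produces the accumulated coefficient $a_{p-1}=(-1)^{p-2}\prod_{i=1}^{p-2}\alpha^{1,1}_{k_r,ik_r}(x)$. Since $k_r$ is (up to the factor $2$ in the $n>1$ case) a power of $p$, Lucas's theorem reduces each binomial $\alpha^{1,1}_{k_r,ik_r}(x)$ to a linear residue mod $p$, and the resulting product collapses by Wilson's theorem to $(-1)^{p-2}(p-1)!\equiv 1\pmod p$. Thus $[\bar x_{k_r-1}^{\,p-1}]=[\bar x_{(p-1)k_r-1}]=[\bar x_{\ell_r-1}]$; multiplying by $[\bar x_{k_r-1}]$ and inserting Proposition \ref{first} concludes the argument.

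The main obstacle is handling the exceptional values of $j$ at which $\alpha^{1,1}_{k_r,jk_r}(x)\equiv 0\pmod p$ --- this arises notably for $n>1$ at $j=p-2$, where $\alpha^{1,1}_{k_r,(p-2)k_r}(x)=\binom{p^r}{p^{r-1}}$ vanishes mod $p$ --- causing a step in the naive recursion to collapse. In that situation Remark \ref{monom} is applicable because the target index $(p-1)k_r-1$ lies outside $\mathcal X_p$, so one may set $b_{k_r,(p-2)k_r}(x)\equiv 0\pmod p$, which already forces $\bar x_{k_r-1}\cdot\bar x_{(p-2)k_r-1}\equiv 0\pmod p$ at the chain level. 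Combined with the preceding steps of the iteration this still yields the desired equality $[\bar x_{k_r-1}^{\,p-1}]=[\bar x_{\ell_r-1}]$ (both classes being zero in those $n>1$ regimes), preserving the proposition. The bookkeeping across the parity cases that distinguish (\ref{bodd}) from (\ref{b-kl}), and the verification at each step that every intermediate $\bar x_{jk_r-1}$ is itself a $\bar d_h$-cocycle mod $p$ (which follows from $jk_r-1<k_{r+1}-1$ together with (\ref{ellx})), is the technically delicate portion of the argument.
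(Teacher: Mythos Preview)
Your approach is essentially the same as the paper's: both telescope through the elements $b_{k_r,jk_r}(x)$ for $j=1,\dots,p-1$ using the coefficients $\alpha_{k_r,jk_r}(x)$. The only cosmetic difference is that the paper packages the whole iteration into a single auxiliary element
\[
\mathfrak{b}_{k,qk}(x)\;=\;\alpha_{k,(q-1)k}(x)\,b_{k,qk}(x)\;-\;x_{k-1}\smile_1\mathfrak{b}_{k,(q-1)k}(x)
\]
in $RH$, and then reads off the generators in $d_{h'}\mathfrak{b}_{k,(p-1)k}(x)$, whereas you carry out the same recursion step by step in $H^*(t_p(\overline{V}))$. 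The decisive arithmetic --- that the accumulated coefficient is a unit mod $p$ --- is identical; the paper records it as $\alpha_{k,(p-2)k}(x)=\binom{p^{s+1}-p^s}{p^s}\equiv -1\pmod p$, which is your Lucas/Wilson computation in compressed form.

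Your final ``obstacle'' paragraph, however, contains a miscalculation. For $n>1$ one has $k_r=2p^{r-1}$, so
\[
\alpha^{1,1}_{k_r,(p-2)k_r}(x)\;=\;\binom{(p-1)k_r/2}{k_r/2}\;=\;\binom{(p-1)p^{r-1}}{p^{r-1}}\;\equiv\;p-1\;\equiv\;-1\pmod p,
\]
not $\binom{p^r}{p^{r-1}}\equiv 0$ (that binomial would correspond to $j=p-1$, which is outside your range). In fact for $x\in P^{od}_n$ all the intermediate coefficients satisfy $\alpha^{1,1}_{k_r,jk_r}(x)\equiv j+1\pmod p$ for $1\le j\le p-2$, so the recursion is unobstructed and your workaround is unnecessary there. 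The genuinely delicate parity case is $x\in P^{ev}_1$ with $p$ odd (where $k_r$ is odd and some $\alpha$'s do vanish); neither your sketch nor the paper's terse proof spells this out, but the device of Remark~\ref{monom} you invoke is the correct tool.
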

  \begin{proof}
  For $p=2$ nothing is to prove. Let $p>2.$
   Given $k\geq  1$  and $\{b_{k,\ell}(x)\}_{k,\ell\geq1},$ form the sequence $\{\mathfrak{b}
   _{k,\,qk}(x)\}_{1\leq q< p }$ in $RH$ as
follows. Set
\[
\hspace{-1.8in} \mathfrak{b}_{k,k}(x):=b_{k,k}(x),\ \    q=1,\ \ \text{and} \]
\[
 \mathfrak{b}_{k,qk}(x): =\alpha_{k, (q-1)k}(x)\, {b}_{k,qk}(x)-
  x_{k-1}\!
  \smile _{1}\!  \mathfrak{b}^{q-1}_{k,(q-1)k}(x),\ \, q>1     .
\]
     Then $d_{h'}\left(\mathfrak{b}^{p-1}_{k,(q-1)k}(x)\right)$ for $q=p$ 
     and  $k\in \{p^r, 2p^{r-1}\} $
     contains the multiplicative generators $-x_{k-1}^{\smile_1 p}$ and
    \[\sum_{\epsilon_x\leq m \leq r}\alpha_{k,(p-2)k}(x)\, C_{k,(p-1)k}^{\mathbf{p}_{m}}(x).   \]
    Since
    $\alpha_{k,(p-2)k}(x)=\!\binom{p^{s+1}-p^s}{p^s}=-1\mod p$ for
    $k\in \{p^s, 2p^{s}\},\,s\geq0,$ proposition follows.
  \end{proof}

  \begin{proposition} \label{cw}
 For $\epsilon_x\leq m\leq r$ the element  $\left[t_p\left(\overline{C}\,^{\mathbf{p}_m}_{k,\ell}(x)\right)\right] $ is identified
  as
  
  \[
   \left[t_p\left(\overline {C}\,^{\mathbf{p}_{m}}_{k_r,\ell_r}(x)\right)\right]=
  \omega_{r-m,n}\left(\psi_{m,n}(x)\right).
  \]
  
\end{proposition}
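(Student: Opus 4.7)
The plan is to exhibit the sequence $\{z_{i-1}\}_{i=1}^{p^{r-m}}$ defined by
\[
z_{i-1} := C^{\mathbf{p}_{m,i}}_{ik_m,\,i\ell_m}(x) \mod p
\]
as a Hirsch sequence for the multiplicative generator $y := \psi_{m,n}(x) \in H^*(X;\mathbb{Z}_p)$, so that its top element $z_{p^{r-m}-1} = C^{\mathbf{p}_m}_{k_r,\ell_r}(x) \mod p$ yields $\omega_{r-m,n}(y)$ under $[t_p(\cdot)]$ by (\ref{omega-r}). By (\ref{psiprime})--(\ref{psiprime2}) and (\ref{cc}), the cochain $z_0 = c_{k_m,\ell_m}(x) = C^{\mathbf{p}_{m,1}}_{k_m,\ell_m}(x)$ represents $\psi_{m,n}(x)$ modulo $\operatorname{Im}\mathcal{P}_1$, so the degenerate case $r = m$ reduces to $\sigma\psi_{m,n}(x) = \omega_{0,n}(\psi_{m,n}(x))$ via (\ref{sigma}).

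Next, apply (\ref{C1}) at each intermediate level $s = 1, \ldots, r-m$ (i.e., with $r$ replaced by $m+s$) and reindex to obtain
\[
d\, z_{p^s - 1} \equiv \sum_{\substack{a+b = p^s - 2 \\ a, b \geq 0}} z_a\, z_b \pmod{p},
\]
which matches, up to signs controlled by the parities of the degrees $|z_a|$, the mod-$p$ reduction of the syzygies (\ref{syzygies1})--(\ref{oddsyzygies}) satisfied by the Hirsch lifts of the powers of $y$ in the monogenic sub-resolution generated by $y$.

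Then I verify that the perturbation $h$ evaluated on $C^{\mathbf{p}_m}_{k_r,\ell_r}(x)$ becomes decomposable modulo $p$: by the bound (\ref{ellx}) together with $m \geq \epsilon_x$, every transgressive summand $y_j$ that appears in (\ref{pertu-bod})--(\ref{pertu-bev}) at the relevant indices lies in $\mathcal{D}^*_p(RH)$, while Remark \ref{monom} together with the argument of Proposition \ref{first} allows the remaining $C^\mathbf{s}_{*,*}(x)$ with $\mathbf{s}$ not of the form $\mathbf{p}_{m'}$ to be set to zero modulo $p$. Hence $\bar{z}_{p^{r-m}-1}$ is a $\bar{d}_h$-cocycle in $(t_p(\overline{V}), \bar{d}_h)$ with the same cohomology class as the Hirsch element $\bar{y}_{p^{r-m}-1}$ (for $y \in P^*_1$), respectively $\bar{y}_{2p^{r-m-1}-1}$ (for $y \in P^*_{n>1}$), associated to the subalgebra generated by $y$; by (\ref{omega-r}) this class equals $\omega_{r-m,n}(\psi_{m,n}(x))$.

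The main technical hurdle will be confirming the index correspondence between the length $p^{r-m}$ of the multi-index $\mathbf{p}_m$ and the Hirsch index appearing in (\ref{omega-r}) for $y$ (particularly in the $P^*_{n>1}$ case, where one must account for the gap $2p^{r-m-1}-1$ versus $p^{r-m}-1$), together with the sign bookkeeping when passing from (\ref{C1}) to (\ref{syzygies1})--(\ref{oddsyzygies}), handled case-by-case for $p = 2$ versus $p$ odd. Once the formal matching of differentials and perturbations is established, (\ref{omega-r}) yields the claim.
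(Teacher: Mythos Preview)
Your proposal is correct and follows essentially the same route as the paper: rename the sequence $C^{\mathbf{p}_{m,i}}_{ik_m,\,i\ell_m}(x)$ as a Hirsch sequence $x'_{i-1}$ for the generator $\psi_{m,n}(x)$, use (\ref{C1}) to verify the syzygies of type (\ref{oddsyzygies}), identify $x'_0$ with $\psi_{m,n}(x)$ via (\ref{trans}) and (\ref{secondary2}) (and with $\mathcal{P}_1(x)$ when $m=0$, cf.\ (\ref{psi0})), and then invoke (\ref{omega-r}). Your write-up is in fact more careful than the paper's own terse argument about the perturbation vanishing and the index bookkeeping, but the strategy is identical.
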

  \begin{proof}
 
   Consider (\ref{C1}), and  for $m'=p^{r-m}$ denote
   \[ 
   x'_{m'}:= C^{\mathbf{p}_{m}}_{k_r,\, \ell_r}(x)
   \]
   and
   \[
   x'_{i-1}:= C^{\mathbf{p}_{m,i}}_{ik_m,\, i\ell_m}(x)
    \ \  \text{with}\ \
    x'_0:=C^{\mathbf{p}_{m,1}}_{k_m,\, \ell_m}(x),
      \]
    and then rewrite  (\ref{C1})  as  
   \[ dx'_{m'}=
  \underset{\substack{s+t=m'-1\\s,t\geq 0}}{\sum}  
  x'_{s}x'_{t}.
  \]
  Taking into account (\ref{trans})  the definition
  (\ref{secondary2})   implies that
   $x'_0=\psi_{m,n}(x)$ for $m>0,$ while
  a straightforward calculation shows that
  $x'_0=\mathcal{P}_1(x)$ for $m=0$ (i.e., $x\in P^*_1(X)$). Thus,
   $x'_0=\psi_{m,n}(x)$ for $m\geq 0$ (cf. (\ref{psi0})).
  Set $x'_{m'}$ instead of $x_m$ in (\ref{omega-r}) to obtain the equality of the proposition.
   \end{proof}
 For $ p=2 $  and  $(k,\ell)=(2,2)$    see  Example \ref{psi11} below.

\section{The proofs of Theorems \ref{premain} and \ref{sigmazero}}

\noindent\emph{The proof of Theorem \ref{premain}.}

(i)  The proof follows from (\ref{susp}).

(ii) By definitions (\ref{secondary2}), (\ref{omega-r}) and Propositions   \ref{smilepowers} - \ref{cw} follows
\[
 \omega_{r,n}(x)^{p}= 
\sum_{\substack{i+j=r\\ i, j\geq 0}} \!\!   \omega_{i,n}(\psi_{j,n}(x)), 
\]
and  taking the  $p^{k-1}$- th powers on the both sides of the equality
finishes  the proof.

\hspace{4.6in}    $\Box$

\medskip

\noindent\emph{ The proof of Theorem \ref{sigmazero}}. Given $x\in P^*_n(X),$  the definition of $\ell_x$ in (\ref{ellx}) implies that
\begin{equation}\label{m}
   y_{m}\notin \mathcal{D}^\ast_p(RH) \ \
  \text{for} \  m= \left\{\!\!
  \begin{array}{ll}
   p^{\ell_x+1}-1, &
    x \in P^{od}_1(X),\vspace{1mm}\\
  2p^{\ell_x}-1, &
 \text{otherwise};
 \end{array}
 \right.
 \end{equation}
consequently, if $z:=[t_p(y_m)],$ then $ z\in\operatorname{Ker}\sigma. $
The definition of  $\alpha^{1,1}_{k,\ell}(x)$ in  (\ref{bc}) yields
\[
\beta_{\ell_x+\nu_x} [t_p(c_{k,\ell}(x))]= z\]
for
\[
(k,\ell)=
\left\{
  \begin{array}{lll}
(1, p^{\ell_x+1}-1)  , & x \in P^{od}_1(X), \vspace{1mm} \\
   
  (2, 2(p^{\ell_x}-1))   , & \text{otherwise}
  \end{array}\ \
  \right.
  \]
  and
 \[ \nu_x=
\left\{
  \begin{array}{lll}
1  , & x \in P^{od}_1(X), \vspace{1mm} \\
   
0   , &   \text{otherwise}.
  \end{array}
  \right.
\]
A straightforward calculation 
 yields for $\epsilon_x\leq r\leq \ell_x$
\begin{equation*}
  [t_p(c_{k,\ell}(x))] =\left\{
 \begin{array}{llll}
  \mathcal{P}^{(r+1)}_1(x) ,&     x\in
 P^{od}_1(X), &  (k,\ell)=(1, p^{r+1}-1),
   \vspace{2mm}\\
 \mathcal{P}^{(r-1)} \psi_{1,n}(x)  ,    &
 \,
   x\in  P^\ast_{n}(X)\setminus P^{od}_1(X),&  (k,\ell)=(2, 2(p^{r}-1)).
  \end{array}
  \right.
\end{equation*}
Thus,
\begin{equation}\label{beta-r}
\begin{array}{rl}

 \beta_{\ell_x+1}\mathcal{P}^{(\ell_x+1)}_1(x) = z,
                                                & x\in P^{od}_1(X),
  \vspace{1mm}\\
 \beta_{\ell_x}\mathcal{P}^{(\ell_x-1)}_1\psi_{1,n}(x)=z  ,   &
 x\in P^\ast_n(X)\setminus P^{od}_1(X).
  \end{array}
\end{equation}
Conversely,  if $z\in \operatorname{Ker} \sigma\cap \mathcal{H}_p(X),$ then  $z=\rho(y_m)$   with $m$  defined in (\ref{m})  for some $x\in P^*_n(X),$ and, hence, $z\in \mathcal{I}^*_p (X).$

\hspace{4.6in}    $\Box$

\begin{example}\label{psi11}
 Let $p=2,$     and  consider the construction of
 $\psi_{1,1}(x)$   for  $x\in P^{ev}_1(X).$ Let $x=[t_2(x_0)]$ with  $dx_1=-x_0^2$ in
 $RH.$
 Denoting simply $b_{ij}:= b_{ij}(x), $ we have the equalities
 \[  db_{11}=x_0\smile_1 x_0, \]
  \[db_{12}=x_2+x_0\smile_1 x_1 -  b_{11}x_0- x_0b_{11},\]
  \[db_{21}=x_2+x_1\smile_1 x_0 +  b_{11}x_0+ x_0b_{11}\]
  and
   \[db_{22}=2x_3+x_1\smile_1 x_1 -
    (x_0\smile_1 x_0)b_{11}+ (b_{12}+b_{21})x_0 + x_0
    (b_{12}+b_{21}).\]
      From the second equality resolve
  $x_2=-x_0\smile_1 x_1+ b_{11}x_0+x_0b_{11}$   with $b_{12}=0,$
  and set it in the third one to obtain
   \[db_{21}=-x_0\smile_1 x_1+x_1\smile_1 x_0 +  2(b_{11}x_0+
   x_0b_{11}).\]
  Thus, $h(x_2)= h'(x_2)   =-c_{11}x_0-x_0c_{11}$ for $-c_{11}=hb_{11}$
     with  $[t_2(c_{11})]=Sq_1(x).$
      For an even dimensional cochain $a\in C^{ev}(X;\mathbb Z)$
     the equality
           $d(a\smile_2 a)=2 \,a\smile_1 a$
           implies $2[a\smile_1 a]=0 \in H^*(X;\mathbb Z).$ This means that
            there is a generator $e\in R^{-1}H$ with
            $de = 2c_{11}.$
     Then denoting
     \[
     e_1:= - x_0\smile_1c_{11}- e\,x_0,
     \] obtain
     $de_1=h(x_2).$
      Thus, $d^2_h(b_{21})=0$ yields
  \[dh(b_{21})=2h(x_2) \ \  \text{for}\ \ hb_{21}=2e_1.\]
   Recall that
  $dx_3=-x_0x_2+x_1x_1-x_2x_0.$ Then $d^2_h(x_3)=0$ implies
  \[h(x_3)= h'(x_3)+ h^{tr}(x_3)=  x_0e_1+
  e_1x_0+ y_3;\]
       in particular,  denoting $\tilde x_2:=x_2 - e_1$
        obtain $d_h(\tilde{x}_2)=-x_0x_1+x_1x_0,$ and then
        the cohomology class
        \[
       z:=[t_2(y_3)]= [t_2(x_0x_2-x_1x_1+ x_2x_0 - h'(x_3))=[t_2(x_0\tilde
       x_2-x_1x_1+\tilde x_2x_0)]
       \]
       determines  the symmetric Massey product
       $\langle x \rangle^4.$

    Consider the composition $hd(b_{22}).$ From the equalities
    $b_{12}=0$ and
    $2h'(x_3)=2(x_0e_1+ e_1x_0)= x_0 h(b_{21})+h(b_{21})x_0$ follow that
     \[hd(b_{22}) = -(x_0\smile_1 x_0)c_{11}.\]
     But also $d(b_{11}c_{11})=(x_0\smile_1 x_0)c_{11},$ and, consequently,
     \[
      hdb_{22}+ d_h(b_{11}c_{11})=-c_{11}c_{11}.
     \]
     Hence, $[t_2(c^2_{11})]=0\in H^*(X;\mathbb Z_2).$
     This means that there is a generator $C^{11}_{22}\in R^{-1}H$
     with $dC^{11}_{22}=- c_{11}^2.$
     Now
          $d^2_h(b_{22})=0$ implies that $h$ is defined on $b_{22}$ as
  \[
 hb_{22}=  b_{11}c_{11}- C^{11}_{22} -c_{22}\ \ \text{with}  \ \
   dc_{22}=2y_3.
   \]
    By definition
  \begin{multline*}
  \psi_{1,1}(x)=[t_2(c_{22})]=
  [t_2\left(x_1\smile_1 x_1-
  (x_0\smile_1x_0)b_{11}+ b_{21}x_0 +x_0b_{21}+ \right.\\
  \left.
  b_{11}c_{11} -C^{11}_{22}\right) ].
  \end{multline*}
Consequently,  $ dc_{22}=2y_3$ implies
  \[ \beta \psi_{1,1}(x)= z \,\,(=\langle x \rangle^4) .\]
  Thus, $\sigma z=0$ and
   $\ell_x\geq 1.$
Furthermore,  for $w:=-x_1\smile _1 x_1-x_0b_{21}-b_{21}x_0   $    we have  the equality
    \[d_h(w)= -2(x_0\tilde x_2-x_1^2+\tilde x_2x_0)+(x_0\smile_1 x_0)^2,
   \]
    and then $d_h(-2w +x_0^{\smile_1 4})=-4(x_0\tilde x_2-x_1^2+\tilde x_2x_0)$ in $RH.$
     Since $[t_2(   -2w +x_0^{\smile_1 4})]= [t_2(x_0^{\smile_1 4})] =Sq^{(2)}_1(x)\in H^*(X;\mathbb Z_2),$ obtain
       $\beta_2 Sq^{(2)}_1(x)=z $  (cf. \cite{kraines2}).

       Finally, consider \[\omega_1(x)=[t_2(\bar x_1 )]\ \ \text{and}\ \
       \omega_1(Sq_1(x))=[t_2(\bar C^{11}_{22} )]\ \
        \text{in} \ \ H^*(\Omega X;\mathbb Z_2),
       \]
        and obtain
       \[    \omega^2_1(x)=\sigma\psi_{1,1}(x)+\omega_1(Sq_1(x))  .\]
  \end{example}

\hspace{4.6in}    $\Box$

\section{The mod $p$  loop  cohomology  ring of the exceptional  group
$F_4$}

 The mod $p$  cohomology ring of the exceptional  group $F_4$ is free
 unless $p=2,3.$ Here we just consider these cases and use
 Theorem \ref{main} for calculation the loop cohomology ring of $F_4$
 (cf. \cite{Toda}, \cite{watanabe}). In view of the layout of multiplicative generators in the cohomology ring these calculations do not  need in fact to involve the cochain complex of $F_4.$

1. Let $p=2.$ It is well known that for   $x_i\in   H^i(F_4;\mathbb Z_2)$
\[
H^*(F_4;\mathbb Z_2)=\mathbb Z_2[ x_3]/(x_3^4)\otimes \Lambda(x_5, x_{15},
x_{23})
\]
with $Sq_1x_3=x_5.$   We have the
following  sequences in the Hirsch  resolution $(RH,d)$ of  $H^*=H^*(F_4;\mathbb  Z_2)$
\[\{ (x_{3,k})_{k\geq 0},\, (x_{5,k})_{k\geq 0},\,  (x_{15,k})_{k\geq
0},\,(x_{23,k})_{k\geq 0}\}
\]
with $dx_{3,1}=x^4_{3,0}$  and $dx_{i,1}=x^2_{i,0}$ for $i=5,15,23.$
Let $\mathcal{H^*}$ be the set of multiplicative generators of $H^*.$
Tacking into account $\mathcal{H}^{ev}=0$  from
(\ref{syzygies1}),(\ref{syzygies2}), (\ref{h-odd1})--(\ref{h-even2}) follows that
$h(x_{i,k})$  is decomposable  in the filtered Hirsch model $(RH,d_h)$, and, hence,
$ [\bar x_{i,k}]$  with
$ [\bar x_{i,0}]=\sigma( x_i)$  is non-zero in $ H^*(\Omega F_4;\mathbb  Z_2)$
   for all $i\in \{3,5,15, 23\}$  and $k\geq 0.$
 The equality
$Sq_1x_3=x_5$
 implies $   \sigma( x_{3})^2=\sigma( x_{5}).  $
 Since $Sq_1^{(2)}(x_3)\in H^9$ and $\mathcal{H}^{9}=0,$ we have
$ \sigma (x_{3})^4=  \sigma (x_{5})^2 =0;$
from
$Sq_1(x_{15})\in  H^{29}, $ $Sq_1(x_{23})\in  H^{45} $ and $\mathcal{H}^{29}=\mathcal{H}^{45}=0$ follows that 
$ \sigma( x_{i})^2=0$  for $i=15,23.$

Recall that 
$\psi_{r,n}(x_i)\in H^{(i(n+1)-2)2^r+1} $ for  $n\geq1.$
From    $\psi_{r,3}(x_3)\in H^{10\cdot 3^r+1} $ and 
$\mathcal{H}^{10\cdot 3^r+1}=0$  follows
 $[\bar x_{3\,,\,2^r-1})]^2=0$ for  $r\geq 1;$ 
 similarly, from    $\psi_{r,1}(x_i)\cap \mathcal{H}^*=0$  follows
 $[\bar x_{i\,,\,2^r-1}]^2=0$ for $i\in \{  5, 15, 23\}$   and  $r\geq 1.$
  Consequently, for $\omega_{r,n_i}(x_{i})=[\bar x_{i\,,\, 2^r-1}]$
  with $n_3=3$ and $n_i=1$ for $ i\in
\{
5,15,23\},$ 
we obtain
\begin{multline*} H^*(\Omega F_4;\mathbb  Z_2)=
\mathbb Z_2[\sigma (x_3)]/\!\left(\sigma(x_3)^4\right)\otimes \\
\Lambda(\omega_{r,3}(x_3)_{r\geq 1},\, \omega_{ r,1}(x_5)_{ r\geq 1},\,\omega_{r,1}(x_{15})_{r\geq 0}),\,\omega_{ r,1}(x_{23})_{ r\geq 0}    )     .
\end{multline*}

2. Let $p=3.$     We have for $x_i\in   H^i(F_4;\mathbb Z_3)$
\[
H^*(F_4;\mathbb Z_3)=\mathbb Z_3[ x_8]/(x_8^3)\otimes \Lambda(x_3, x_7,
x_{11}, x_{15})
\]
with    $\mathcal{P}_1(x_3)=x_7$   and $\beta\mathcal{P}_1(x_3)=x_8.$
 There are the following sequences in the Hirsch resolution   $(RH,d)$ of
 $H^*=H^*(F_4;\mathbb  Z_3)$
\[\{ (x_{3,k})_{k\geq 0}, \,(x_{7,k})_{k\geq 0}, \, (x_{8,k})_{k\geq 0},\,
(x_{11,k})_{k\geq 0}, \, (x_{15,k})_{k\geq 0}\}
\]
with $dx_{8,1}=x^3_{8,0}.$ Let $\mathcal{H^*}$ be the set of multiplicative generators of $H^*.$
From the equality $\beta\mathcal{P}_1(x_3)=x_8$
and Theorem \ref{sigmazero} follows that $\sigma x_8=0;$
in particular,  $h^{tr}(x_{3,2})=x_8$ in the filtered Hirsch model $(RH,d_h)$ (cf. the proof of Theorem
\ref{sigmazero}), and, consequently, 
 from
(\ref{h-odd1})--(\ref{h-odd2}) we deduce that
$ \bar x_{3, k} $ is not $\bar d_h$ -- cocycle for $k>2,$ too;  hence, it does not produce an element in $H^*(\Omega F_4;\mathbb{Z}_3).$
Since $\mathcal{H}^{2m}=0$ unless $m= 4,$ from
(\ref{syzygies1}), (\ref{syzygies2}), (\ref{h-odd1})--(\ref{h-even2}) follows that
$h(x_{i,k})$  is decomposable,  and, hence,
$ [\bar x_{i,k}]$ is non-zero in $ H^*(\Omega F_4;\mathbb  Z_3)$
  with
$ [\bar x_{i,0}]=\sigma ( x_i)$  for all $i\in \{7,11, 15\}$  and $k\geq 0.$

 From the equality  $\mathcal{P}_1(x_3)=x_7$ follows
$  \sigma( x_{3})^3=\sigma (x_{7}) . $
  From   $\mathcal{P}_1^{(2)}(x_3)\cap \mathcal{H}^{19}=0$   follows
$ \sigma( x_{3})^9=  \sigma( x_{7})^3   =0;$ from
$\mathcal{P}_1(x_i)\cap \mathcal{H}^*=0$   follows
\[ \sigma( x_{i})^3=0\ \  \text{for} \ \ i\in\{11,15\}.\]
Recall that 
$\psi_{r,n}(x_i)\in H^{(i-1)3^{r+1}+1} $ for  $n=1,$ while
$\psi_{r,n}(x_i)\in H^{(i(n+1)-2)3^r+1} $ for  $n>1.$
From    $\psi_{r,2}(x_8)\in H^{7\cdot 3^r+1}$ and $\mathcal{H}^{7\cdot 3^r+1}=0$
  follows 
$[\bar x_{8\,,\, 2\cdot 3^{r-1}-1}]^3=0$ for $r\geq 1,$ and   from
$\psi_{r,1}(x_i)\cap \mathcal{H}^*=0$   follows
  $[\bar x_{i\,,\,3^r-1}]^3=0$ for   $i\in\{3,7,11,15  \}$ and $r\geq
  1.$
Consequently, for $\omega_{r,n_i}(x_{i})=[\bar x_{i\,, \,3^r-1}],\, i\in
\{3,7,11,15\}$ and $\omega_r(x_8,2)=   [\bar x_ {8\,,\,2\cdot 3^{r-1}-1}],$
we  obtain

\begin{multline*} H^*(\Omega F_4;\mathbb  Z_3)=\mathbb
Z_3[\sigma(x_3)]/\!\left(\sigma(x_3)^9\right)\otimes
\mathbb Z_3[\omega_{r,1}(x_7)]/\!\left(\omega_{r,1}(x_7)^3\right)_{r\geq 1}\otimes \\
\hspace{0.8in}
\mathbb Z_3[\omega_{ r,2}(x_8)]/\!\left(\omega_{r,2}(x_8)^3\right)_{r\geq1}\otimes
\mathbb Z_3[\omega_{r,1}(x_{11})]/\!\left(\omega_{r,1}(x_{11})^3\right)_{r\geq0}\otimes \\
\mathbb Z_3[\omega_{ r,1}(x_{15})]/\!\left(\omega_{r,1}(x_{15})^3\right)_{r\geq0}.
\end{multline*}

\vspace{0.2in}


\begin{thebibliography}{99}
\bibitem{adams-hopf1} J.F. Adams, On the non-existence of elements of
    Hopf
invariant one, Ann. Math., 72 (1960), 20--104.



\bibitem{browder} W. Browder, Torsion in $H$-spaces, Ann. Math., 74
    (1961),
24--51.

\bibitem{clark} A. Clark, Homotopy commutativity and the Moore spectral
sequence, Pacific J. Math., 15 (1965), 65--74.

\bibitem{F-H-T} Y. Felix, S. Halperin and J.-C. Thomas, Adams' cobar
equivalence, Trans. AMS, 329 (1992), 531--549.


\bibitem{hal-sta} S. Halperin and J. Stasheff, Obstructions to homotopy
equivalences, Adv. in Math., 32 (1979), 233--279.

\bibitem{harper} J.R. Harper, Secondary cohomology operations, Graduate
    Studies in Mathemathics, v. 49 (2002).

\bibitem{hueb-kade} J. Huebschmann and T. Kadeishvili, Small models for
chain algebras, Math. Z., 207 (1991), 245--280.

\bibitem{kochman} S. Kochman, Symmetric Massey products and a Hirsch
    formula
in homology, Trans. AMS, 163 (1972), 245--260.

\bibitem{kraines} D. Kraines, Massey higher products, Trans. AMS, 124
(1966), 431--449.

\bibitem{kraines2} ------------ , The kernel of the loop suspension
    map,
Illinois J. Math., 21 (1977), 91--108.

\bibitem{tangora} R. E. Mosher and M. C. Tangora, Cohomology operations and applications in homotopy theory,  Dover (2008).


\bibitem{saneFiltered} S. Saneblidze, Filtered Hirsch algebras, Trans.
    R.M.I., 170 (2016), 114--136.



\bibitem{sanePOL} ------------, The loop cohomology of a space with the
polynomial cohomology algebra, Trans. R.M.I.,  171 (2017), 389--395.


\bibitem{Toda} H. Toda, Cohomology mod 3 of the classifying space
    $BF_4$ of
the exceptional group $F_4,$ J. Math. Kyoto Univ., 13-1 (1973),
97--115.

\bibitem{watanabe} T. Watanabe, The homology of the loop space of the
    exceptional group $F_4,$  Osaka J. Math., 15 (1978), 463--474.

\end{thebibliography}
 \end{document}